\newtheoremstyle{definition}
{10pt}
{10pt}
{}
{}
{\bfseries}
{}
{.5em}
{}
\newtheoremstyle{plain}
{10pt}
{10pt}
{\itshape}
{}
{\bfseries}
{}
{.5em}
{}
\theoremstyle{plain}	
\newtheorem{thm}{Theorem} 
\newtheorem{lem}[thm]{Lemma}
\newtheorem{prop}[thm]{Proposition}
\theoremstyle{definition}	
\newtheorem{remark}[thm]{Remark}
\newcommand{\RR}{\mathbb{R}}      
\newcommand{\vol}{\operatorname{vol}}
\def\BB{\mathbb{B}}
\def\EE{\mathbb{E}}
\def\NN{\mathbb{N}}
\def\PP{\mathbb{P}}
\def\RR{\mathbb{R}}
\def\BBn{\mathbb{B}^n}
\def\SSn{\mathbb{S}^{n-1}}
\def\cA{\mathcal{A}}
\def\cF{\mathcal{F}}
\def\cH{\mathcal{H}}
\def\cHn{\mathcal{H}^{n-1}}
\def\dint{\textup{d}}
\def\as{{\rm as}}
\begin{document}

\title{\bfseries Surface area deviation between\\ smooth convex bodies and polytopes}

\author{Julian Grote\footnotemark[1], Christoph Th\"ale\footnotemark[2]\, and Elisabeth M. Werner\footnotemark[3]}

\date{}
\renewcommand{\thefootnote}{\fnsymbol{footnote}}
\footnotetext[1]{University of Ulm, Institute of Stochastics, Ulm, Germany.\\ E-mail: julian.grote@uni-ulm.de}

\footnotetext[2]{Ruhr University Bochum, Faculty of Mathematics, Bochum, Germany.\\ E-mail: christoph.thaele@rub.de}

\footnotetext[3]{Department of Mathematics, Case Western Reserve University, Cleveland, USA.\\ E-mail: elisabeth.werner@case.edu}

\maketitle

\begin{abstract}
The deviation of a general convex body with twice differentiable boundary and an arbitrarily positioned polytope with a given number of vertices is studied. The paper considers the case where the deviation is measured in terms of the surface areas of the involved sets, more precisely, by what is called the surface area deviation. The proof uses arguments and constructions from probability, convex and integral geometry. The bound is closely related to $p$-affine surface areas.
\\[0.5em]
{\bf Keywords}. Approximation of convex bodies, $p$-affine surface areas, polytopes, probabilistic method, random polytopes, surface area deviation\\
{\bf MSC}. Primary: 52A20, 52A22, 52B11 Secondary: 53C65, 60D05
\end{abstract}


\section{Introduction and results}

\subsection{Introduction}

Approximation of convex bodies by polytopes belongs to the most classical and fundamental topics studied in convex and discrete geometry, and is an active area of current mathematical research. The reason for this can be explained by the fact that results in this direction are directly relevant for estimating the complexity of geometric algorithms, see \cite{Edelsbrunner,GardnerKiderlen}, for example. Since best approximating polytopes are rarely accessible directly one usually resorts to random constructions. In fact, considering the volume, the surface area, or more generally, the intrinsic volumes, random polytopes show on average the same behaviour as best approximating polytopes, see \cite{GruberBestApprox1,GruberBestApprox2,GruberHandbook,ReitznerBestApproximation}. This philosophy has been taken up by many authors, who typically impose further restrictions on the position of the polytopes relative to the given convex body. Most classically, the approximating random polytope is constructed as the convex hull of (a large number of) independent random points, which are uniformly distributed in the interior of the convex body, see the survey articles \cite{BaranySurvey,HugSurvey,ReitznerSurvey} and the references cited therein. Also studied is the case where the random points are selected on the boundary of the convex body according to the normalized surface measure, see \cite{Reitzner,RichardsonVuWu,SchuettWerner2003}. The obvious advantage of the latter model is that, when the body has strictly positive Gauss curvature almost everywhere,  with probability one each of the random points is automatically a vertex of the random polytope.

On the other hand, arbitrarily positioned polytopes, i.e.\ polytopes with no restrictions on their location in space relative to the given convex body, are only rarely studied in the literature. 
A lower bound in the symmetric difference metric was given in \cite{Boeroetzky2000} for sufficiently smooth convex bodies. An upper bound in the symmetric difference metric was established in \cite{LudwigSchuettWerner} via a random construction, again in the sufficiently smooth case. There is however still a gap by a factor of dimension between upper and lower bound. In \cite{GroteWerner} a random approach with  an arbitrary density function was proposed. This  allows to discuss extremal problems and also relationships to $p$-affine surface areas. However, the symmetric volume difference is not the only measurement to determine the closeness between a convex body and an approximating polytope. It is equally natural to consider a quantity related to the surface areas of the involved sets, see \cite{BoroczkyCsikos,BoroczkyReitzner,Gromer}. As in the case of the symmetric volume difference, also particularly positioned polytopes were studied initially in the literature. The first random construction for arbitrarily positioned polytopes was carried out in \cite{HoehnerSchuettWerner}, but  only in the case where the underlying convex body is the $n$-dimensional Euclidean unit ball. The principal goal of this paper is to generalize the results from \cite{HoehnerSchuettWerner} to arbitrary convex bodies with sufficiently smooth boundaries. Our construction depends on a special density function, which is intimately related to $p$-affine surface areas. On the technical side dealing with the surface area instead of the symmetric volume difference causes several new difficulties which we overcome in this text. We formally describe the set-up and present our main result in the next subsection.

\subsection{Result}

Let $K$ be a convex body in $\RR^n$, $n\ge 2$, that is of class $C^2_+$. More explicitly, this means that $K$ is a compact convex subset of $\RR^n$ with non-empty interior and such that its boundary $\partial K$ is a twice differentiable $(n-1)$-dimensional sub-manifold of $\RR^n$ whose {Gaussian curvature} $\kappa(x)$ is strictly positive for all $x\in\partial K$. We recall that $\kappa(x)$ is the product of the principal curvatures at $x\in\partial K$, while the {mean curvature} $H(x)$ is $1/n$ times the sum of the principal curvatures at $x$. Moreover, the support function of $K$ is denoted by $h_K$, that is, $h_K(u)=\max\{\langle x,u\rangle:x\in K\}$ for $u\in\SSn$. Let us denote in this paper by $\cHn$ the $(n-1)$-dimensional Hausdorff measure. Finally, the {surface area deviation} between two convex bodies $K,L\subset\RR^n$ is defined as 
\begin{equation} \label{sdm}
\Delta_s(K , L):=\cHn\left( \partial (K \cup L) \right)-\cHn \left(\partial(K \cap L) \right). 
\end{equation}
For the presentation of our main result we also need to recall from \cite{HugAffineSurfaceArea,Lutwak1991,MeyerWerner,SchuettWernerPAffineSurfaceAreas,Werner2007}, for example, that for $p\in[-\infty,\infty]\setminus\{-n\}$ the $p$-affine surface area of a convex body $K$, having the origin as an interior point, is given by
\begin{equation}\label{eq:Defas_p}
\as_p(K) := \int_{\partial K}{\kappa(x)^{p\over n+p}\over\langle x,N(x)\rangle^{n(p-1)\over n+p}}\,\cHn(\dint x),
\end{equation}
where $N(x)$ denotes the unique outer unit normal vector at $x$ and where $\langle\,\cdot\,,\,\cdot\,\rangle$ stands for the standard scalar product in $\RR^n$. Let us mention that $p$-affine surface areas are intensively studied quantities in convex geometry and convex geometric analysis. In particular, they play a central role in what is called $L^p$-Brunn-Minkowski theory, see \cite{HaberlSchuster,LYZ2000,WernerYe}, for example.

Our main result provides an upper bound on the surface area deviation between a convex body $K\subset\RR^n$ of class $C_+^2$ and a polytope with a prescribed number of (sufficiently many) vertices. We emphasize that no restriction on the position of the polytope with respect to the given body is required. Especially, we do not assume that the polytope is contained in or contains $K$.

\begin{thm}\label{mainresult}
	Let $K$ be a convex body in $\RR^n$, $n \ge 2$, that is of class $C^2_+$.  Then, there exists a number $N_{K}\in\NN$ depending only on $K$ such that for all $N\geq N_{K}$ there exists a polytope $P$ in $\RR^n$ with exactly $N$ vertices such that
		\begin{equation}\label{maincor}
		\Delta_s(K, P) \le a\, n\, N^{-\frac{2}{n-1}}\, \as_n(K)^\frac{2}{n-1} \  \cHn(\partial K),
		\end{equation}
		where $a \in (0,\infty)$ is an absolute constant. 
\end{thm}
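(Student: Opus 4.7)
The plan is to apply the probabilistic method: construct a random inscribed polytope $P_N$ on $N$ points sampled from $\partial K$, estimate its expected surface area deviation from $K$, and conclude existence of a deterministic $P$ satisfying the same bound. Fix a probability density $f\colon \partial K\to(0,\infty)$ with respect to $\cHn$, whose precise form will be determined at the end. Sample $N$ i.i.d.\ points $X_1,\ldots,X_N$ on $\partial K$ with density $f$, and set $P_N := \mathrm{conv}\{X_1,\ldots,X_N\}$. Since $K$ is convex and each $X_i \in \partial K$, the polytope $P_N$ is inscribed in $K$; hence $K \cup P_N = K$ and $K \cap P_N = P_N$, so the definition \eqref{sdm} reduces to
\begin{equation*}
\Delta_s(K, P_N) = \cHn(\partial K) - \cHn(\partial P_N).
\end{equation*}

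Partition $\partial K$ into caps $\{C_F\}_F$ sitting above the facets $F$ of $P_N$ via radial projection from a fixed interior point of $P_N$ (well-defined with probability one for $N \ge N_K$), giving $\cHn(\partial K) - \cHn(\partial P_N) = \sum_F [\cHn(C_F) - \cHn(F)]$. The $C^2_+$ assumption allows a second-order Taylor expansion of $\partial K$ above a reference point of $C_F$ against its osculating paraboloid, producing the leading-order estimate $\cHn(C_F) - \cHn(F) \le c\, t_F\, \cHn(F)$, where $t_F$ is the cap height and $c$ depends on the principal curvatures at the reference point. An Economic Cap Covering argument in the spirit of B\'ar\'any--Larman then controls $t_F$: the smallest cap containing the required number of sample points has $\cHn$-measure of order $1/(Nf)$, so that $t_F$ is of order $N^{-2/(n-1)}$ times a local function of $f$ and $\kappa$. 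Averaging over the random configuration and converting the sum over facets to an integral over $\partial K$ yields
\begin{equation*}
\EE\bigl[\cHn(\partial K) - \cHn(\partial P_N)\bigr] \le c_n\, N^{-\frac{2}{n-1}} \int_{\partial K} \frac{A(x)}{f(x)^{2/(n-1)}}\,\cHn(\dint x),
\end{equation*}
with an explicit curvature--support weight $A(x)$ built from $\kappa(x)$ and $\langle x, N(x)\rangle$.

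Optimising the right-hand side over probability densities $f$ by Lagrange multipliers (equivalently, by a direct H\"older inequality) and matching the resulting integral against the definition \eqref{eq:Defas_p} of $\as_n$ produces the factor $\as_n(K)^{2/(n-1)}\, \cHn(\partial K)$, thereby matching \eqref{maincor}. The probabilistic method then yields a deterministic polytope inscribed in $K$ with at most $N$ vertices satisfying the bound; topping up to exactly $N$ vertices by inserting further points on $\partial K$ only increases $\cHn(\partial P)$ (by monotonicity of surface area for inscribed convex bodies, e.g.\ via Cauchy's projection formula) and therefore decreases $\Delta_s$, completing the existence claim.

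The main obstacle is the combination of the local cap excess bound with uniform control of $t_F$ over random facets of varying orientation and shape. Taylor expansion yields only pointwise information at a reference point; transferring this into a bound uniform over all caps arising in the random construction, and in a form that integrates cleanly against the density $f$, requires a delicate integral-geometric argument. This is the technical step generalising the unit-ball setting of \cite{HoehnerSchuettWerner} (where the curvature is constant) to arbitrary bodies of class $C^2_+$ via osculating paraboloids.
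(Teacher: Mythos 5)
Your construction is inscribed: you take $P_N=[X_1,\ldots,X_N]\subset K$ with vertices on $\partial K$, so that $\Delta_s(K,P_N)=\cHn(\partial K)-\cHn(\partial P_N)$, and you then bound the expectation of this deficit. That route cannot reach \eqref{maincor}. For random points on $\partial K$ with density $f$ the deficit is not merely bounded above but asymptotically \emph{equal} to
\[
c_n\,N^{-\frac{2}{n-1}}\int_{\partial K}\frac{\kappa(x)^{\frac{1}{n-1}}}{f(x)^{\frac{2}{n-1}}}\,H(x)\,\cHn(\dint x)
\]
(Reitzner's theorem, Proposition \ref{c_{i,n}} of the paper; note the correct weight is the mean curvature $H$, not a function of $\langle x,N(x)\rangle$), and the dimensional constant $c_n$, identified there through M\"uller's formula \eqref{eq:Muller}, is of order $n^{2}$. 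Optimising over $f$ by H\"older and invoking Minkowski's formula \eqref{eq:MinkowskiOriginal} therefore yields at best a bound of order $n^{2}\,N^{-2/(n-1)}\as_n(K)^{2/(n-1)}\cHn(\partial K)$ --- one full factor of $n$ worse than the theorem. The case $K=\BBn$ shows this loss is not an artifact of loose estimation: \eqref{maincor} combined with the affine isoperimetric inequality gives $\Delta_s(\BBn,P)\le a\,N^{-2/(n-1)}\cHn(\partial\BBn)$, whereas by \eqref{eq:Muller} the expected deficit of the inscribed random polytope with uniform vertices (the optimal density by symmetry) is asymptotically of order $n\,N^{-2/(n-1)}\cHn(\partial\BBn)$. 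Since your final step bounds $\Delta_s$ of some realization by exactly this expectation, no choice of density can close the gap: inscribed polytopes genuinely lose a factor of dimension, and the freedom of arbitrary position is not a convenience in this theorem but the source of its strength.

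The paper's proof exploits that freedom: the points are still sampled on $\partial K$ (with the special density $f_n\propto\kappa^{1/2}/h_K^{(n-1)/2}$ of \eqref{eq:Deff_n}), but the polytope is compared with the shrunken body $(1-c)K$, where $c\asymp N^{-2/(n-1)}$ is calibrated in \eqref{choice:c} so that $\EE[\cHn(\partial P_N)]=\cHn(\partial(1-c)K)$. Then $P_N$ neither contains nor is contained in $(1-c)K$, and $\EE[\Delta_s((1-c)K,P_N)]$ equals twice the difference between the expected boundary area of $(1-c)K$ outside $P_N$ and the expected facet area of $P_N$ inside $(1-c)K$; these two contributions cancel to first order. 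The cancellation is carried out facet by facet via the Blaschke--Petkantschin-type formula of Section \ref{subsec:BlaschkePetkantschin}, and the single factor $n$ in \eqref{maincor} comes precisely from the expansion $\bigl(\tfrac{(1-c)h_K(u)}{h}\bigr)^{n-1}-1\le a\,n\,\tfrac{(1-c)h_K(u)-h}{h}$ in Lemma \ref{Schritt1}, with $\as_n(K)^{2/(n-1)}\cHn(\partial K)$ emerging from $f_n$ and Minkowski's formula in Lemmas \ref{Schritt4} and \ref{Schritt5}. With your inscribed construction you can prove the statement only with $n^{2}$ in place of $n$; to obtain \eqref{maincor} you need a cancellation mechanism of this kind, i.e.\ a genuinely non-inscribed comparison.
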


Theorem \ref{mainresult}  should be compared to some related results known from the literature.
To do so, note first that  the $p$-affine isoperimetric inequality \cite{Luwak1996, WernerYe} for $p \geq 0$ states that 
\begin{equation} \label{aip-Gl}
\frac{\as_p(K)}{\as_p(\BBn)} \leq \left(\frac{\vol(K)}{\vol(\BBn)}\right)^\frac{n-p}{n+p}, 
\end{equation}
with equality if and only if $K$ is an ellipsoid.  Here, and throughout the paper, $\BBn$ denotes the Euclidean unit ball.
In particular, for $p=n$ this means that
$$
\as_n(K) \leq \as_n(\BBn).
$$
We put this into \eqref{maincor} and note also that $\as_n(\BBn) =  \cHn (\partial \BBn)$. Using in addition that, by Stirling's formula, the quantity $\cHn(\partial\BBn)^{2\over n-1}$ is bounded by a constant multiple of $1/n$, this yields the existence of a polytope $P$ in $\RR^n$ with exactly $N$ vertices (with $N$ being sufficiently large) such that
\begin{equation}\label{maincor1}
	\Delta_s(K, P) \le a\,  N^{-\frac{2}{n-1}}\, \ \cHn(\partial K).
	\end{equation}
As mentioned in the previous section, the surface area deviation of the $n$-dimensional Euclidean unit ball $\BBn$  and an arbitrarily positioned polytope was treated in \cite{HoehnerSchuettWerner}. There it was shown that for sufficiently large $N$ one can find a polytope $P$ in $\RR^n$ with precisely $N$ vertices and such that
\begin{equation}\label{eq:HoehnerSchuettWerner}
\Delta_s(\BBn,P) 
\leq a\,N^{-{2\over n-1}}\,\cHn(\partial\BBn)
\end{equation}
with an absolute constant $a>0$.  Thus inequality \eqref{maincor1} is the exact analogue for general convex bodies to the case of the Euclidean ball. In fact,  the even slightly stronger inequality \eqref{maincor} holds.

\par
\noindent
In contrast to surface area deviations, volume deviations have been studied more intensively in the literature. For two convex bodies $K,L\subset\RR^n$ we define
$$
\Delta_v(K,L):=\vol(K\cup L)-\vol(K\cap L).
$$
In \cite{GroteWerner} the authors derived an upper bound for the volume deviation of a convex body of class $C_+^2$ and an arbitrarily positioned polytope with a prescribed number of vertices. More precisely, they show that if $K\subset\RR^n$ is a convex body of class $C_+^2$ and if $f:\partial K\to\RR_+$ is a continuous and strictly positive function with $\int_{\partial K} f(x)\cHn(\dint x)=1$, there exists a constant $N_{K,f}\in\NN$ depending only on $K$ and on $f$ such that for all $N\geq N_{K,f}$ one can find a polytope $P_f\subset\RR^n$ with precisely $N$ vertices and such that
\begin{align*}
\Delta_v(K,P_f) \leq a\,N^{-{2\over n-1}}\int_{\partial K}{\kappa(x)^{1\over n-1}\over f(x)^{2\over n-1}}\,\cHn(\dint x)
\end{align*}
for some absolute constant $a\in(0,\infty)$. 
Especially, taking for $f$ the function
$$
f(x) = {\kappa(x)^{1\over n+1}\over \int_{\partial K}\kappa(x)^{1\over n+1}\,\cHn(\dint x)}={\kappa(x)^{1\over n+1}\over \as_1(K)},\qquad x\in\partial K,
$$
yields the existence of an $n$-dimensional polytope $P$ with precisely $N$ vertices (again $N$ sufficiently large) such that
\begin{align}\label{eq:VolumeDeviation}
\Delta_v(K,P) \leq a\, N^{-{2\over n-1}}\,\as_1(K)^{n+1\over n-1}.
\end{align}
A comparison of Theorem \ref{mainresult} and \eqref{eq:VolumeDeviation} shows that the difference between the upper bound for the volume and surface area deviation consists in the appearance of an additional dimension factor $n$ as well as in the replacement of the (classical) affine surface area $\as_1(K)$ by $\as_n(K)$, which is raised to a different power.

\begin{remark}\label{rem:IntrinsicVolumes}
The surface area $\cHn(\partial K)$ of a convex body $K\subset\RR^n$ can be identified with $2$ times the intrinsic volume $V_{n-1}(K)$ of order $n-1$ of $K$ (see \cite{SchneiderBook} for an introduction to intrinsic volumes). Similarly to the volume and the surface area deviation one can also define for each $i\in\{1,\ldots,n\}$ the $i$th intrinsic volume deviation between two convex bodies $K,L\subset\RR^n$ as
$$
\Delta_i(K,L) := V_i(K\cup L)-V_i(K\cap L).
$$
Using methods that are similar as those for the proof of Theorem \ref{mainresult} presented below one can generalize \eqref{maincor} to an upper bound for $\Delta_i(K,P)$, where $P$ is a polytope with sufficiently many vertices. However, in the present text we concentrate on the surface deviation and treat general intrinsic volumes in a future work.
\end{remark}

The remaining parts of the paper are structured as follows. In Section \ref{sec:AuxResults} we present some auxiliary result. In particular, we obtain there a precise asymptotic formula for the expected surface area of a random polytope, which will turn out to be crucial for our approach. We also rephrase there a Blaschke-Petkantschin-type formula for the Hausdorff measure. The final Section \ref{sec:Proof} is devoted to the proof of Theorem \ref{mainresult}.

\section{Auxiliary results}\label{sec:AuxResults}

\subsection{Precise asymptotics for the expected surface area of random polytopes}\label{subsec:PreciseExpectation}

As already explained above, our proof of Theorem \ref{mainresult} is based on the probabilistic method and hence on the construction of a random polytope (which we implicitly assume to be defined on a suitable probability space $(\Omega,\cA,\PP)$). In particular, it will be important for us to have a precise control over the expected surface area of the convex hull of a fixed (but large) number of random points on the boundary of a convex body $K$, which are chosen independently and according to a continuous and everywhere positive density on $\partial K$. To present the statement, let us write $a_N\sim b_N$ for two sequences $(a_N)_{N\in\NN}$ and $(b_N)_{N\in\NN}$ provided that $a_N/b_N\to 1$, as $N\to\infty$. Also, we shall write $\EE$ for the expectation with respect to the underlying probability measure $\PP$.

\begin{prop}\label{c_{i,n}}
Let $K\subset\RR^n$ be a convex body of class $C^2_+$. Let $P_N$ be the convex hull of $N$ points chosen independently at random according to a continuous and positive probability density $f:\partial K\to\RR_+$. Then
\begin{align*}
\cHn(\partial K) - \EE[\cHn(\partial P_N)] \sim & {N^{-{2\over n-1}}\over \pi(n+1)(n-2)!}{\Gamma(n+{2\over n-1})\over\Gamma({n-1\over 2})}\Gamma\Big({n+1\over 2}\Big)^{n+1\over n-1}\\
&\qquad\qquad\times\int\limits_{\partial K}{\kappa(x)^{1\over n-1}\over f(x)^{2\over n-1}}\,H(x)\,\cHn(\dint x).
\end{align*}
\end{prop}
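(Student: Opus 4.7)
The plan is to adapt the facet-integration technique originating in work of Reitzner and of Sch\"utt-Werner to the surface-area functional. Since $K$ is of class $C^2_+$, with probability one each sample point is a vertex of $P_N$, so the facets of $P_N$ partition $\partial P_N$ almost surely, and the \emph{caps} $C(x_1,\dots,x_n)$ that their affine hulls cut off from $\partial K$ partition $\partial K$. An $n$-tuple $\{X_{i_1},\dots,X_{i_n}\}$ forms a facet precisely when the remaining $N-n$ points lie on the cap-complement side of the spanned hyperplane. Writing $\mu_f(A)=\int_A f\,\cHn(\dint\,\cdot)$, exchangeability and Fubini therefore yield
\[
\cHn(\partial K)-\EE[\cHn(\partial P_N)]=\binom{N}{n}\int_{(\partial K)^n}\bigl(\cHn(C)-\cHn(\mathrm{conv}(x_1,\dots,x_n))\bigr)(1-\mu_f(C))^{N-n}\prod_{i=1}^n f(x_i)\,\cHn(\dint x_i).
\]

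Second, I would apply the Blaschke-Petkantschin-type formula for the Hausdorff measure (recorded in the next subsection of the paper) to disintegrate the $n$-fold boundary integral over affine hyperplanes $H$ meeting $\mathrm{int}(K)$. The Jacobian contributes the $(n-1)$-dimensional simplex volume $V_{n-1}(x_1,\dots,x_n)$ together with a product of cosines $\prod_i|\langle N(x_i),u_H\rangle|^{-1}$ accounting for the angles between the boundary normals and the hyperplane's unit normal $u_H$. Parameterizing hyperplanes by $(u_H,s)\in\SSn\times(0,\infty)$ with $s$ the height of the cap below $H$, every geometric quantity --- the cap area, the base-facet area, the avoidance probability, and the Jacobian --- becomes a function of $(u_H,s)$ and of the configuration of the $n$ points inside the cross-section $H\cap K$.

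Third, I would localize asymptotically. The factor $(1-\mu_f(C))^{N-n}$ is exponentially small unless $s$ is of order $N^{-2/(n-1)}$, the scale on which $\partial K$ near the contact point $x_0=x_0(u_H)$ is well approximated by its osculating paraboloid. Standard computations give $\mu_f(C)\sim c_n\,f(x_0)\,\kappa(x_0)^{-1/2}\,s^{(n-1)/2}$, so the rescaling $s=\tau\,N^{-2/(n-1)}$ converts the $N$-dependence into a pure exponential in $\tau^{(n-1)/2}$. The cross-section $H\cap K$ is to leading order an $(n-1)$-dimensional ellipsoid whose principal semi-axes scale like $\sqrt{s}/\sqrt{\kappa_i(x_0)}$; the integration of $V_{n-1}(x_1,\dots,x_n)\prod_i|\langle N(x_i),u_H\rangle|^{-1}$ over $n$-tuples inside it collapses, by a classical Blaschke-type evaluation on an affine image of the unit ball, to a universal constant carrying the $\Gamma((n+1)/2)^{(n+1)/(n-1)}/(n-2)!$ factor. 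The key new ingredient compared with the volume-deviation argument of \cite{GroteWerner} is the leading term of the \emph{curved-minus-flat area} $\cHn(C)-\cHn(\mathrm{conv}(x_1,\dots,x_n))$, which on the paraboloid model is proportional to $H(x_0)\cdot s\cdot \vol_{n-1}(H\cap K)$ --- this is exactly how the mean curvature appears in the statement.

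The main obstacle is the precise determination of the constant. After collecting all Jacobians and performing the $\tau$-integration, one must evaluate a single Gamma integral of the form $\int_0^\infty \tau^{\alpha-1}e^{-\lambda\tau^{(n-1)/2}}\dint\tau$ with $\alpha$ arranged so that $\alpha/((n-1)/2)=n+2/(n-1)$; this produces $\Gamma(n+2/(n-1))$, and together with the factor $\Gamma((n-1)/2)^{-1}$ emerging from the normalization of the spherical integration yields the $\pi^{-1}$ prefactor via Gamma-function identities. Running the substitution uniformly in $u_H\in\SSn$ requires the $C^2_+$ hypothesis (so that $\kappa$ and the principal curvatures are bounded away from zero on $\partial K$) together with positivity and continuity of $f$, which supply a dominating function legitimizing the interchange of limit and integral and deliver the claimed asymptotic formula.
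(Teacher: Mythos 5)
Your very first identity is where the argument breaks down, and the failure is not a technicality. The facets of $P_N$ do tile $\partial P_N$, but for $n\ge 3$ the caps cut off from $\partial K$ by the facet hyperplanes do \emph{not} tile $\partial K$: the caps of two facets sharing a ridge overlap in a neighbourhood of that ridge, because a relative-interior point of the ridge lies at distance of the order of the cap height inside $K$, so $\partial K$ reaches beyond \emph{both} facet hyperplanes there, and the total overcount is of constant order, not $o(1)$. One can see this directly from your formula: the orthogonal projection of the cap $C$ onto $H$ covers the section $K\cap H$, so $\cHn(C)\ge \cHn(K\cap H)$, while the facet $[x_1,\ldots,x_n]$ is a simplex inscribed in the asymptotically ellipsoidal section $K\cap H$ and hence satisfies $\cHn([x_1,\ldots,x_n])\le c_{n-1}\,\cHn(K\cap H)$ with a dimensional constant $c_{n-1}<1$. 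Consequently $\binom{N}{n}\,\EE\big[(\cHn(C)-\cHn([x_1,\ldots,x_n]))\,(1-\mu_f(C))^{N-n}\big]\ \ge\ (c_{n-1}^{-1}-1)\,\EE[\cHn(\partial P_N)]$ up to negligible terms, which tends to a positive constant, whereas the left-hand side of your identity is of order $N^{-2/(n-1)}$. (In the plane the arcs over the edges do tile the circle, which is presumably the source of the intuition, but the proposition concerns all $n\ge2$.) The correct exact decomposition, used in Reitzner's paper and in Lemma \ref{Schritt1} of the present text, is radial: on the event $0\in P_N$ one splits $\partial K$ into the pieces $\partial K\cap{\rm cone}(F)$ over the facets $F$, and each local difference $\cHn(\partial K\cap{\rm cone}(F))-\cHn(F)$ is of the order (cap height)$\times\cHn(F)$; this is also where the factor $\sum_i\kappa_i\sim nH$ genuinely enters. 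Your ``curved minus flat'' paraboloid computation compares the cap with the whole section $K\cap H$, not with the facet, so it does not attach to the identity you start from and cannot repair it.

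Independently of this, note that the paper does not prove the proposition by any such direct local analysis. It quotes Reitzner's general asymptotic result, which already yields $\cHn(\partial K)-\EE[\cHn(\partial P_N)]\sim c_n N^{-2/(n-1)}\int_{\partial K}\kappa^{1/(n-1)}f^{-2/(n-1)}H\,\dint\cHn$ with an unspecified dimensional constant $c_n$, and M\"uller's explicit asymptotics for uniform points on $\partial\BBn$, and then identifies $c_n$ by specializing Reitzner's formula to $K=\BBn$, $f\equiv\cHn(\partial\BBn)^{-1}$ and comparing the two statements. So even after replacing your cap decomposition by the cone decomposition, your route amounts to re-deriving Reitzner's theorem together with M\"uller's constant from scratch --- a much longer undertaking in which the constant-tracking you leave vague (the Blaschke-type moment of the random simplex in an ellipsoidal section, the Gamma integrals, the uniformity in $u$) is precisely the hard part. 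If a short correct proof is the goal, the comparison argument is the way to go.
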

\begin{proof}
The proof is based on a combination of two known results. First, Theorem 1 in \cite{Mueller} (see also \cite{Affentranger}) states that if $K=\BBn$ is the $n$-dimensional Euclidean unit ball and $f=\cHn(\partial\BBn)^{-1}$ is the density of the uniform distribution on $\partial\BBn$ then
\begin{equation}\label{eq:Muller}
\cHn(\partial\BBn)-\EE[\cHn(\partial P_N)] \sim {2^{n+1\over n-1}\pi^{{n\over 2}+{1\over n-1}}\over (n+1)(n-2)!}{\Gamma(n+{2\over n-1})\over\Gamma({n-1\over 2})}\Bigg({\Gamma({n+1\over 2})\over\Gamma({n\over 2})}\Bigg)^{n+1\over n-1}\,N^{-{2\over n-1}}.
\end{equation}
On the other hand a special case of Theorem 1 in \cite{Reitzner} states that if $K$ and $f$ are as in the statement of the theorem then there exists a constant $c_n\in(0,\infty)$ only depending on the dimension $n$ such that
\begin{equation}\label{eq:Reitzner}
\cHn(\partial K) - \EE[\cHn(\partial P_N)] \sim c_n\,\int\limits_{\partial K}{\kappa(x)^{1\over n-1}\over f(x)^{2\over n-1}}\,H(x)\,\cHn(\dint x)\,N^{-{2\over n-1}}.
\end{equation}
Especially, taking $K=\BBn$ and $f=\cHn(\partial\BBn)^{-1}$ in \eqref{eq:Reitzner} gives
\begin{equation}\label{eq:ReitznerSpecialcase}
\cHn(\partial\BBn)-\EE[\cHn(\partial P_N)] \sim c_n\,\cHn(\partial\BBn)^{{n+1\over n-1}}\,N^{-{2\over n-1}}.
\end{equation}
Comparing now \eqref{eq:Muller} with \eqref{eq:ReitznerSpecialcase} implies that the constant $c_n$ in \eqref{eq:Reitzner} is given by
\begin{align*}
c_n &= {2^{n+1\over n-1}\pi^{{n\over 2}+{1\over n-1}}\over (n+1)(n-2)!}{\Gamma(n+{2\over n-1})\over\Gamma({n-1\over 2})}\Bigg({\Gamma({n+1\over 2})\over\Gamma({n\over 2})}\Bigg)^{n+1\over n-1}\,\cHn(\partial\BBn)^{-{n+1\over n-1}}\\
&={1\over \pi(n+1)(n-2)!}{\Gamma(n+{2\over n-1})\over\Gamma({n-1\over 2})}\Gamma\Big({n+1\over 2}\Big)^{n+1\over n-1}
\end{align*}
after simplifications. Plugging this into \eqref{eq:Reitzner} proves the claim.
\end{proof}

\begin{remark}
It is clear that Proposition \ref{c_{i,n}} can be generalized to intrinsic volumes of arbitrary order, since the result in \cite{Reitzner} holds in this framework and the precise asymptotics for the intrinsic volumes $n$-dimensional unit ball has been computed in \cite{Affentranger} (in fact, the so-called Quermassintegrals were considered in \cite{Affentranger}, but they are directly linked with the intrinsic volumes up to a multiplicative factor).
\end{remark}

\subsection{Tools from integral geometry}\label{subsec:BlaschkePetkantschin}

Tools from integral geometry will play a crucial role at several places in the proof of Theorem \ref{mainresult}. For completeness we gather them in the present section as not all of them might be well known and since we have in mind a broad readership with different mathematical backgrounds. We start with the following change-of-variables formula taken from \cite[Equation (2.62)]{SchneiderBook}.

\begin{prop}\label{prop:ChangeOfVariables}
Let $K\subset\RR^n$ be a convex body of class $C_+^2$ and let $g:\partial K\to\RR$ be a continuous function. For $u\in\SSn$ we denote by $x(u)\in\partial K$ the unique point with outer unit normal vector $u$, i.e., $u=N(x)$. Then
\begin{align}\label{nvol}
\int\limits_{\SSn}g(x(u))\,\cHn(\dint u) = \int\limits_{\partial K} g(x)\,\kappa(x)\,\cHn(\dint x).
\end{align}
\end{prop}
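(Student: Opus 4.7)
The plan is to recognise the identity as a change-of-variables formula applied to the Gauss map $N:\partial K\to\SSn$, $x\mapsto N(x)$. Since $K$ is of class $C^2_+$, the hypothesis $\kappa(x)>0$ for every $x\in\partial K$ together with the inverse function theorem guarantees that $N$ is a $C^1$-diffeomorphism from $\partial K$ onto $\SSn$, with inverse $u\mapsto x(u)$. In particular $x(u)$ is well defined for every $u\in\SSn$ and the composition $u\mapsto g(x(u))$ is a continuous function on $\SSn$, so that both integrals in \eqref{nvol} are finite.

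The key geometric input is the identification of the Jacobian of $N$. At each $x\in\partial K$ the differential $dN_x$ coincides with the Weingarten map (shape operator) acting on the tangent space $T_x\partial K$. In a principal frame at $x$ it is diagonal with entries equal to the principal curvatures $k_1(x),\dots,k_{n-1}(x)$, so that its determinant (computed with respect to the inner products that $T_x\partial K$ and $T_{N(x)}\SSn$ inherit from the ambient Euclidean structure) equals
\[
|\det dN_x| = \prod_{i=1}^{n-1}k_i(x) = \kappa(x).
\]
This is the only genuinely geometric step, and it is the one I would take most care to state clearly.

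With this at hand, the result follows from the standard area/transformation formula for Lipschitz maps between $C^1$-manifolds of the same dimension (see, e.g., Federer's area formula): for every continuous $h:\partial K\to\RR$,
\[
\int_{\partial K} h(x)\,|\det dN_x|\,\cHn(\dint x) = \int_{\SSn} h(N^{-1}(u))\,\cHn(\dint u).
\]
Substituting the computation $|\det dN_x|=\kappa(x)$ from the previous step and taking $h(x)=g(x)$, while noting that $N^{-1}(u)=x(u)$, one obtains
\[
\int_{\partial K} g(x)\,\kappa(x)\,\cHn(\dint x) = \int_{\SSn} g(x(u))\,\cHn(\dint u),
\]
which is exactly \eqref{nvol}. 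There is no real obstacle here: the only subtlety is making sure that the Jacobian is computed with respect to the induced Riemannian metrics rather than with respect to an arbitrary chart, which is precisely why the Gauss--Kronecker curvature $\kappa(x)$ (and not some chart-dependent quantity) appears.
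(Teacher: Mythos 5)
Your argument is correct, but note that the paper does not prove this proposition at all: it simply quotes it as Equation (2.62) of Schneider's book \cite{SchneiderBook}. What you have written is essentially the standard derivation underlying that cited formula: the Gauss map $N:\partial K\to\SSn$ is a $C^1$-diffeomorphism whose differential is the Weingarten map, so its Jacobian (with respect to the induced metrics) is the Gauss--Kronecker curvature $\kappa(x)>0$, and the area formula then converts the spherical integral into the boundary integral weighted by $\kappa$. Schneider's treatment is the same computation read in the opposite direction, via the reverse Gauss map $u\mapsto x(u)=\nabla h_K(u)$, whose Jacobian is the product of the principal radii of curvature $1/\kappa(x(u))$; the two formulations are equivalent. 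One small point to tighten: the inverse function theorem by itself only makes $N$ a \emph{local} diffeomorphism. Global injectivity and surjectivity require the convexity of $K$ (two distinct boundary points with the same outer normal would lie in a common supporting hyperplane, forcing a segment in $\partial K$ and hence a vanishing principal curvature, contradicting $\kappa>0$; surjectivity follows since every $u\in\SSn$ is attained as the normal at a point where the supporting hyperplane $\{\langle\,\cdot\,,u\rangle=h_K(u)\}$ touches $K$), or one can simply observe that $u\mapsto\nabla h_K(u)$ is an explicit $C^1$ inverse because $h_K\in C^2$ for bodies of class $C^2_+$. With that sentence added, your proof is complete and self-contained, which is arguably more informative than the paper's bare citation.
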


Next we rephrase a special case of Minkowski's integral formula, see \cite[Equation (5.60)]{SchneiderBook} with $j=1$ there and Remark \ref{rem:Minkowski} below. We recall that $h_K$ stands for the support function of $K$ and $H(x)$ is the mean curvature at $x\in\partial K$.

\begin{prop}
Let $K\subset\RR^n$ be a convex body of class $C_+^2$. Then 
\begin{align}\label{eq:MinkowskiOriginal}
\cHn(\partial K) = \int\limits_{\partial K}h_K(N(x))\,H(x)\,\cHn(\dint x).
\end{align}
\end{prop}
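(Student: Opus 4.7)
The plan is to establish Minkowski's first formula by applying the divergence theorem on the closed $C^2$ hypersurface $\partial K$ to the tangential component of the position vector field. Because $K$ is of class $C^2_+$, the Gauss map $\partial K\to\SSn$ is a diffeomorphism and the unique support point for the direction $N(x)$ is $x$ itself, so
\begin{equation*}
h_K(N(x)) = \langle x, N(x)\rangle\quad\text{for every } x\in\partial K,
\end{equation*}
which I use throughout to identify support function and radial projection.

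I would then proceed in the following steps. \textbf{Step 1.} Decompose the position vector along $\partial K$ as $x = x^{\top} + \langle x, N(x)\rangle N(x)$ with $x^{\top}$ tangent to $\partial K$. \textbf{Step 2.} Since $\partial K$ is a boundaryless closed $C^2$ manifold, the surface divergence theorem yields
\begin{equation*}
\int_{\partial K}\operatorname{div}_{\partial K}(x^{\top})\,\cHn(\dint x) = 0.
\end{equation*}
\textbf{Step 3.} Fix at each point an orthonormal tangent frame $e_1,\dots,e_{n-1}$ and compute the surface divergence of the ambient vector field $x$ as $\sum_{i}\langle D_{e_i}x, e_i\rangle = \sum_{i}\langle e_i, e_i\rangle = n-1$, since the Euclidean derivative of the position vector is the identity. \textbf{Step 4.} For the normal part, the product rule together with the fact that purely normal contributions pair trivially with each $e_i$ gives
\begin{equation*}
\operatorname{div}_{\partial K}\bigl(\langle x, N\rangle N\bigr) = \langle x, N(x)\rangle \sum_{i=1}^{n-1}\langle D_{e_i}N, e_i\rangle.
\end{equation*}
\textbf{Step 5.} Subtracting the two divergences and integrating, the identity in Step 2 reduces to
\begin{equation*}
0 = (n-1)\cHn(\partial K) - \int_{\partial K}\langle x, N(x)\rangle\,\operatorname{tr}(dN)\,\cHn(\dint x).
\end{equation*}

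The step that requires the most care, and that I would expect to be the main obstacle to present carefully, is the identification $\operatorname{tr}(dN) = (n-1)H(x)$. With the outward-normal orientation appropriate to a convex body, the shape operator $dN$ has the principal curvatures as its eigenvalues, so its trace is their sum, which by the paper's convention is $(n-1)H(x)$; substituting this and the identification $\langle x,N(x)\rangle = h_K(N(x))$ from the preliminary observation, and dividing through by $n-1$, produces exactly \eqref{eq:MinkowskiOriginal}. The surface divergence theorem on the $C^2$ closed manifold $\partial K$ is classical and does not require further comment. As a sanity check one recovers the formula for $K=r\BBn$: both sides equal $r^{n-1}\cHn(\SSn)$, since $h_K(N(x))=r$ and $H(x)=1/r$. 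An alternative route, if one prefers to avoid surface divergence calculations, is to differentiate the homogeneity identity $\cHn(\partial((1+t)K)) = (1+t)^{n-1}\cHn(\partial K)$ at $t=0$ and compare with the first variation of hypersurface area under the normal displacement $\phi(x)=h_K(N(x))$, which is $(n-1)\int_{\partial K}\phi(x)H(x)\,\cHn(\dint x)$ and yields the same equality.
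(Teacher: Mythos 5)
Your proposal is correct in substance, but it takes a genuinely different route from the paper: the paper does not prove \eqref{eq:MinkowskiOriginal} at all, it simply quotes the general Minkowski integral formula from \cite{SchneiderBook} (Equation (5.60) there) and specializes to $j=1$ in Remark \ref{rem:Minkowski}, whereas you give a self-contained derivation via the tangential divergence theorem on the closed $C^2$ hypersurface $\partial K$, applied to the tangential part of the position field, together with the support identity $h_K(N(x))=\langle x,N(x)\rangle$. Your Steps 1--5 are the standard computation and are sound: $\operatorname{div}_{\partial K}x=n-1$, $\operatorname{div}_{\partial K}\bigl(\langle x,N\rangle N\bigr)=\langle x,N\rangle\operatorname{tr}(dN)$, and integrating the tangential part over the boundaryless manifold $\partial K$ gives $(n-1)\,\cHn(\partial K)=\int_{\partial K}\langle x,N(x)\rangle\operatorname{tr}(dN)\,\cHn(\dint x)$; your first-variation/homogeneity alternative is an equivalent repackaging of the same identity. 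What your route buys is independence from the general formula with (normalized) elementary symmetric functions; what the paper's citation buys is the whole family $j\in\{1,\ldots,n-1\}$ at once and automatic consistency with Schneider's normalization.

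One point needs care. You justify $\operatorname{tr}(dN)=(n-1)H(x)$ ``by the paper's convention'', but the paper's introduction literally defines $H(x)$ as $1/n$ times the sum of the principal curvatures; with that reading your computation yields $\cHn(\partial K)=\frac{n}{n-1}\int_{\partial K}h_K(N(x))H(x)\,\cHn(\dint x)$ rather than \eqref{eq:MinkowskiOriginal}, and your own sanity check on $r\BBn$ would be off by the factor $(n-1)/n$. The identity \eqref{eq:MinkowskiOriginal}, like Schneider's $H_1$ (the normalized first elementary symmetric function), corresponds to $H(x)=\frac{1}{n-1}\sum_{i}\kappa_i(x)$, the average of the $n-1$ principal curvatures. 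So state that normalization explicitly in your proof instead of anchoring the constant to the paper's sentence, which appears to be a slip; with that fixed, your argument establishes the proposition.
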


\begin{remark}\label{rem:Minkowski}
The general Minkowski integral formula says that for a convex body $K\subset\RR^n$ of class $C_+^2$ and for $j\in\{1,\ldots,n-1\}$,
$$
\int_{\partial K}H_{j-1}(x)\,\cHn(\dint x) = \int_{\partial K}h_K(N(x))\,H_j(x)\,\cHn(\dint x),
$$
where $H_j(x)$ is the $j$th elementary symmetric function of the principal curvatures of $K$ at $x\in\partial K$. Taking $j=1$ we obtain \eqref{eq:MinkowskiOriginal}.
\end{remark}

Integral-geometric transformation formulas, also known as Blaschke-Petkantschin formulas, are widely used in the theory or random polytopes. While the original Blaschke-Petkantschin formula for the Lebesgue measure can be applied if the random points are distributed in the interior of a given convex body, in our case we need a version for the Hausdorff measure. In a very general form, such a transformation formula was obtained by Z\"ahle \cite{Zähle} using methods from geometric measure theory (see also \cite{VedelJensen} for a more elementary approach under slightly stronger assumptions). The following special case can also directly be derived from the classical Blaschke-Petkantschin formula for the Lebesgue measure by a limiting procedure as demonstrated in \cite{Reitzner}. For points $x_1, \ldots, x_i \in \RR^n$, $i \in \NN$, we denote by $[x_1,\ldots,x_i]$ the convex hull of $x_1,\ldots,x_i$. Especially if $i=n$, $\cHn([x_1,\ldots,x_n])$ is the $(n-1)$-Hausdorff measure of the $(n-1)$-dimensional simplex spanned by $x_1,\ldots,x_n$.

\begin{prop}\label{lem:Zähle}
Let $K\subset\RR^n$ be a convex body of class $C_+^2$ and let $g:\partial K\to\RR_+$ be a continuous function. Then, 
	\begin{align*}
		&\int\limits_{\partial K} \cdots \int\limits_{\partial K} g(x_1,\ldots,x_n)\,\cHn(\dint x_1)\ldots\cHn(\dint x_n)\\
		&  = (n-1)! \int\limits_{\SSn} \int\limits_0^\infty \int\limits_{\partial K \cap H} \cdots \int\limits_{\partial K \cap H} g(x_1,\ldots,x_n)\,\cHn([x_1,\ldots,x_n])\\
		&\qquad \qquad \qquad\qquad\qquad\qquad\times \prod_{j=1}^{n} l_{H}(x_j)\,\cH^{n-2}(\dint x_1)\ldots\cH^{n-2}(\dint x_n)\dint h\cHn(\dint u) ,
	\end{align*}
	where $l_{H}(x_j) := \left\| {\rm proj}_{H} N(x_j)\right\|^{-1}$ denotes the inverse length of the orthogonal projection onto $H$ of the unique outer unit normal vector $N(x_j)$ of $\partial K$ at $x_j$.
\end{prop}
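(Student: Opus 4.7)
The plan is to deduce Proposition \ref{lem:Zähle} from the classical Blaschke--Petkantschin formula for the Lebesgue measure on $\RR^n$ by a thin-tube limiting argument, as suggested in the text preceding the statement and carried out in \cite{Reitzner}. Recall that for every non-negative measurable $F:(\RR^n)^n\to\RR_+$ the classical formula reads
\begin{align*}
\int_{\RR^n}\!\!\!\cdots\!\!\!\int_{\RR^n} F(x_1,\ldots,x_n)\,\dint x_1\cdots\dint x_n
&= (n-1)!\int_{\SSn}\!\int_0^\infty\!\int_H\!\!\cdots\!\!\int_H F\cdot\cHn([x_1,\ldots,x_n])\\
&\qquad\times\prod_{j=1}^n\cHn(\dint x_j)\,\dint h\,\cHn(\dint u),
\end{align*}
where $H = H(u,h) := \{z\in\RR^n : \langle z,u\rangle = h\}$. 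I will specialise this to a family $F_\e$ supported in a thin tube around $\partial K$ and let the thickness $\e$ tend to zero.

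Since $K\in C_+^2$ with $\partial K$ compact, there is $\e_0>0$ such that $(y,t)\mapsto y+tN(y)$ is a $C^1$-diffeomorphism from $\partial K\times(-\e_0,\e_0)$ onto the open tube $T_\e := \{y+tN(y):y\in\partial K,\,|t|<\e\}$, with Jacobian $\prod_{i=1}^{n-1}(1+t\k_i(y))$ tending uniformly to $1$ on $\partial K$ as $\e\to 0$. Let $\pi_K : T_\e\to\partial K$ be the nearest-point projection, and for the continuous $g$ of the statement set
$$
F_\e(x_1,\ldots,x_n) := (2\e)^{-n}\,g(\pi_K(x_1),\ldots,\pi_K(x_n))\,\prod_{j=1}^n \mathbf{1}_{T_\e}(x_j).
$$
Because $\pi_K(y+tN(y)) = y$, Fubini together with the uniform convergence of the tube Jacobian gives
$$
\int_{\RR^n}\!\!\!\cdots\!\!\!\int_{\RR^n} F_\e\,\dint x_1\cdots\dint x_n \;\longrightarrow\; \int_{\partial K}\!\!\!\cdots\!\!\!\int_{\partial K} g(x_1,\ldots,x_n)\,\cHn(\dint x_1)\cdots\cHn(\dint x_n),
$$
which is the left-hand side of the proposition.

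The main geometric step is the right-hand side. Fix $(u,h)$ such that $H=H(u,h)$ meets $\partial K$ transversally. Near a crossing point $y\in\partial K\cap H$ decompose $N(y) = \alpha\,e_{n-1} + \beta\,u$ in an orthonormal frame in which $e_1,\ldots,e_{n-2}$ span $T_y(\partial K\cap H)$ and $e_{n-1}\in H$ is orthogonal to them. Then $\alpha = \|\mathrm{proj}_H N(y)\| = l_H(y)^{-1}$ and, to first order, a point $x\in H$ has distance $|\langle x-y,N(y)\rangle| = \alpha|x_{n-1}|$ from the tangent hyperplane $T_y\partial K$. Hence $T_\e\cap H$ is, locally in $H$, a strip of width $2\e\,l_H(y)$ around the $(n-2)$-submanifold $\partial K\cap H$, and a coarea-type calculation yields
$$
\frac{1}{2\e}\int_H \mathbf{1}_{T_\e}(x)\,\varphi(\pi_K(x))\,\cHn(\dint x) \;\longrightarrow\; \int_{\partial K\cap H}\varphi(y)\,l_H(y)\,\cH^{n-2}(\dint y)
$$
for every continuous $\varphi$. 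Applying this coordinatewise, together with the continuity of the simplex area $\cHn([x_1,\ldots,x_n])$, converts the inner $H^n$-integral of $F_\e$ into the desired integral over $(\partial K\cap H)^n$ with the weight $\prod_j l_H(x_j)$.

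The principal obstacle is passing the limit through the outer integral $\int_{\SSn}\int_0^\infty\dint h\,\cHn(\dint u)$, because $l_H(y)$ blows up as $H$ approaches tangency to $\partial K$. The set of tangential pairs $(u,h)$ has measure zero, only hyperplanes meeting $K$ contribute, and the strict positivity of the Gauss curvature guaranteed by the $C_+^2$-hypothesis provides a quantitative lower bound on $\|\mathrm{proj}_H N\|$ away from tangency; together these give an integrable majorant and allow dominated convergence. Once this step is handled, matching the two limits of the classical formula yields Proposition \ref{lem:Zähle}.
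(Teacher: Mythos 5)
The paper does not actually prove this proposition: it is quoted as a special case of Z\"ahle's general coarea-type formula, with the remark that it can alternatively be obtained from the classical Blaschke--Petkantschin formula by a limiting procedure as in Reitzner's paper. Your proposal is exactly that limiting procedure, so in spirit you are following the derivation the paper points to; the outline (thin tube $T_\varepsilon$, the factor $\varepsilon\, l_H(y)$ as the half-width of the slab $T_\varepsilon\cap H$ around $\partial K\cap H$, convergence of both sides of the classical formula) is the right skeleton, and your identification of $l_H(y)^{-1}=\|\mathrm{proj}_H N(y)\|$ as the transversality factor is correct.

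However, the step you yourself call the principal obstacle is only asserted, and that is where essentially all the work of this derivation lies. Two concrete issues. First, the ``quantitative lower bound on $\|\mathrm{proj}_H N\|$ away from tangency'' degenerates at rate $\sqrt{z}$, $z:=h_K(u)-h$, as $H$ approaches the tangent hyperplane (this is where positive curvature enters), so your candidate majorant behaves like $z^{-n/2}$ from the product $\prod_j l_H(x_j)$; to see that it is nevertheless integrable in $h$ you must balance it against the smallness of the cap section, namely $\cH^{n-2}(\partial K\cap H)^n\sim z^{n(n-2)/2}$ and $\cHn([x_1,\ldots,x_n])\lesssim z^{(n-1)/2}$, giving a total exponent $\tfrac{n^2-2n-1}{2}\ge -\tfrac12$. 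None of this computation appears in your argument, and without it ``integrable majorant'' is an assertion, not a proof; these cap asymptotics are precisely the estimates that Reitzner's limiting argument (and, in this paper, Lemma \ref{lem:Reitzner2}) supplies. Second, the majorant must be uniform in $\varepsilon$, and your strip picture $T_\varepsilon\cap H\approx\{\,\mathrm{dist}(\cdot,\partial K\cap H)<\varepsilon l_H\,\}$ is only valid for hyperplanes that are transversal at scale $\varepsilon$; in the boundary layer $|h_K(u)-h|\lesssim \varepsilon$ (including $h\in(h_K(u),h_K(u)+\varepsilon)$, where $H$ meets the tube but not $\partial K$, so the limit integrand is $0$ while $F_\varepsilon$ is not) the section $T_\varepsilon\cap H$ is a solid cap rather than a thin annulus, and you need a separate estimate showing that $(2\varepsilon)^{-n}\int_{(H\cap T_\varepsilon)^n}\cHn([x_1,\ldots,x_n])$ integrated over this layer in $h$ is $o(1)$ (or at least dominated uniformly in $\varepsilon$). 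As written, dominated convergence is invoked for a family of integrands for which no dominating function has been exhibited, so the proof has a genuine gap at its crucial analytic step, even though the overall strategy is sound and matches the route the paper cites.
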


\section{Proof of Theorem \ref{mainresult}}\label{sec:Proof}

\subsection{Preliminaries}

We start by introducing the set-up and some more notation. Throughout this section $K$ will denote a convex body in $\RR^n$ of class $C_2^+$ and $f:\partial K\to\RR_+$ will denote a strictly positive and continuous function, which satisfies $\int_{\partial K}f(x)\,\cHn(\dint x)=1$. In the course of the proof we will specialize $f$ further by taking $f=f_n$ with $f_n$ given by
\begin{equation}\label{eq:Deff_n}
		f_n(x) = \frac{1}{\as_n(K)} \   \frac{ \kappa(x)^\frac{1}{2}}{h_K(u(x))^\frac{n-1}{2}},\qquad x\in\partial K,
\end{equation}
which integrates to $1$ by definition \eqref{eq:Defas_p} of $\as_n(K)$ and since $\langle x,N(x)\rangle=h(u(x))$. We denote by $\PP_f$ the probability measure on $\partial K$ with density $f$ with respect to $\cHn$. That is,
$$
{\dint \PP_f\over\dint\cHn}(x) = {f(x)},\qquad x\in\partial K.
$$
Also, if $H\subset\RR^n$ is a hyperplane with $H \cap K \neq \emptyset$, we denote by $\PP_{f_{\partial K \cap H}}$ the probability measure  on $\partial K \cap H$ with normalized density $f$ with respect to the $(n-2)$-dimensional Hausdorff measure restricted to $\partial K\cap H$, i.e.,
\begin{align*}
{\dint \PP_{f_{\partial K \cap H}} \over \dint\cH^{n-2}} (x) = \frac{f(x)}{\int\limits_{\partial K \cap H} f(y) \,\cH^{n-2}(\dint y)},\qquad x\in \partial K\cap H.
\end{align*}

\subsection{The probabilistic construction}

As in \cite{GroteWerner,LudwigSchuettWerner} we will obtain the approximating polytope $P$ of $K$ by using the probabilistic method. To be more precise, we consider a convex body that is slightly bigger than the original body $K$, then choose $N$ points at random on the boundary of the bigger body and take the convex hull of these points. We shall prove that such a random polytope satisfies the desired property on average and then argue that also a realization with the same property exists.

Without loss of generality we can and will assume that the origin, denoted by $0$, is in the interior of $K$. More specifically, we assume that $0$ coincides with the centre of gravity of $K$. Since the density $f$ lives on the boundary of $K$, we will choose the random points on $\partial K$ and approximate a slightly smaller body, say $(1-c)K$, where $c := c_{n,N}$ depends on the  dimension $n$ and the number of points $N$, and has to be chosen carefully. We compute the expected surface area deviation
$$
\EE[\Delta_s((1-c)K, P_N)]
$$
between $(1-c)K$ and a random polytope $P_N : =[X_1,\ldots,X_N]$ whose vertices $X_1,\ldots,X_N$  are independent and randomly chosen from the boundary of $K$ according to the probability measure $\PP_f$. In order to do this, we choose $c$ such that the following holds:
\begin{equation}\label{choice:c}
\EE[\cHn(\partial P_N)] = \cHn(\partial (1-c) K) = (1-c)^{n-1} \cHn(\partial K).
\end{equation}
By Theorem \ref{c_{i,n}}, we have that
\begin{align*}
\cHn(\partial K) -  \EE[\cHn(\partial P_N)]   \sim  N^{-{2\over n-1}}\,c_{n,K,f},
\end{align*}
as $N\to\infty$, with
$$
c_{n,K,f}:={1\over \pi(n+1)(n-2)!}{\Gamma(n+{2\over n-1})\over\Gamma({n-1\over 2})}\Gamma\Big({n+1\over 2}\Big)^{n+1\over n-1}\int_{\partial K}{\kappa(x)^{1\over n-1}\over f(x)^{2\over n-1}}\,H(x)\,\cHn(\dint x).
$$
Hence, with the choice \eqref{choice:c} of $c$, this yields
\begin{align*}
\cHn(\partial K) - (1-c)^{n-1} \cHn(\partial K)  \sim  N^{-\frac{2}{n-1}}\,c_{n,K,f},
\end{align*}
as $N \rightarrow \infty$, and leads to 
\begin{align}\label{c}
c &\sim N^{-\frac{2}{n-1}}\,{c_{n,K,f}\over (n-1)\cHn(\partial K)}.
\end{align}
In particular, for sufficiently large $N$ we get the lower bound 
\begin{align}\label{AbschätzungfürC}
c \ge \left(1-\frac{1}{n}\right) N^{-\frac{2}{n-1}}\,{c_{n,K,f}\over (n-1)\cHn(\partial K)}.
\end{align}
\vskip 3mm
\noindent

\subsection{A first upper bound for the expected surface area deviation}

In this paper we denote for fixed $u\in \SSn$ and $h\ge 0$ by $H:= H(u,h)$ the hyperplane orthogonal to $u$ and at distance $h$ from the origin. Further, we let $H^+$ be the half-space bounded by $H$ which contains the origin and put
\begin{align}\label{H+}
	\PP_f(\partial K \cap H^+) := \int\limits_{\partial K \cap H^+} f(x) \,\cHn(\dint x).
\end{align} 
Also recall that the support function of a convex body $K$ is denoted by $h_K:\SSn\to\RR$. In what follows, $a\in (0,\infty)$ will always denote an absolute constant whose value might change from occasion to occasion. 

\begin{lem}\label{Schritt1}
For sufficiently large $N$ and for all sufficiently small $ \epsilon \ge c h_K(u)$ we have that
\begin{align*}
&\EE[\Delta_s((1-c)K, P_N)]\\
& \le a\, \binom{N}{n}\, n! \int\limits_{\SSn} \int\limits_{h_K(u) - \epsilon}^{h_K(u)}  \left(\PP_f(\partial K \cap H^+)\right)^{N-n} \frac{1}{h_K(u)} \max \{0, ((1-c)h_K(u) - h)\}\\
&\qquad  \times \int\limits_{\partial K \cap H} \cdots \int\limits_{\partial K \cap H} (\cHn([x_1,\ldots,x_n]))^2 \prod_{j=1}^{n} l_{H}(x_j)\,\cH^{n-2}(x_1)\ldots\cH^{n-2}(\dint x_n)\dint h\cHn(\dint u).
\end{align*}
\end{lem}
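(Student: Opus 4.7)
The strategy is to decompose the expected surface-area deviation into contributions from individual facets of the random polytope $P_N$, and then apply the Blaschke--Petkantschin-type formula of Proposition~\ref{lem:Zähle}.

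I would first use the elementary identity
$$\cHn(\partial(A\cup B)) - \cHn(\partial(A\cap B)) = \cHn(\partial A) + \cHn(\partial B) - 2\cHn(\partial A\cap B) - 2\cHn(\partial B\cap A)$$
for convex bodies in generic position, together with the balance condition~\eqref{choice:c}, to obtain
$$\EE[\Delta_s((1-c)K,P_N)] = 2\EE[\cHn(\partial K_c\setminus P_N)] - 2\EE[\cHn(\partial P_N\cap K_c)] \;\le\; 2\EE[\cHn(\partial K_c\setminus P_N)],$$
where $K_c:=(1-c)K$. Each point $y\in\partial K_c\setminus P_N$ must be strictly separated from the origin by the supporting hyperplane of some facet of $P_N$; and such a hyperplane is the affine hull of an $n$-subset $\{X_{i_1},\ldots,X_{i_n}\}$ of the random vertices precisely when the remaining $N-n$ points lie in the half-space $H^+$ containing the origin. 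A union bound over $n$-subsets, combined with the exchangeability of the $X_j$, yields
$$\EE[\cHn(\partial K_c\setminus P_N)] \le \binom{N}{n}\int_{(\partial K)^n}\cHn(\partial K_c\cap H^-(u,h))\,(\PP_f(\partial K\cap H^+))^{N-n}\prod_{j=1}^n f(x_j)\,\cHn(\dint x_j),$$
where $H = H(u,h)$ denotes the affine hull of $x_1,\ldots,x_n$ and $H^-=\RR^n\setminus H^+$.

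Applying Proposition~\ref{lem:Zähle} transforms this $n$-fold integral over $\partial K$ into an iterated integral over $u\in\SSn$, $h\in\RR_+$, and the slice $(\partial K\cap H)^n$, producing the combinatorial factor $(n-1)!$ and the Jacobian $\cHn([x_1,\ldots,x_n])\prod_{j}l_H(x_j)$. What remains is a geometric estimate bounding $\cHn(\partial K_c\cap H^-(u,h))$ together with the densities $f(x_j)$ by a constant multiple of
$$n\cdot\frac{\max\{0,(1-c)h_K(u)-h\}}{h_K(u)}\cdot \cHn([x_1,\ldots,x_n]),$$
valid for $h\in[h_K(u)-\epsilon,h_K(u)]$ with $\epsilon\ge ch_K(u)$ sufficiently small. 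Combined with the Jacobian this yields the square $(\cHn([x_1,\ldots,x_n]))^2$ appearing in the lemma; the extra factor $n$ upgrades $(n-1)!$ to $n!$; and the continuous density $f$ is absorbed, by its uniform bound on the compact set $\partial K$, into the absolute constant $a$. The truncation of the $h$-integration to $[h_K(u)-\epsilon,h_K(u)]$ is forced by this last estimate, which only holds when $H$ is close enough to the apex of $K$ in direction $u$.

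\textbf{Main obstacle.} The principal difficulty is precisely the cap-area estimate in the final step: turning the cap area $\cHn(\partial K_c\cap H^-(u,h))$, which a priori depends only on $(u,h)$, into an integrand compatible with the simplex-based expression in the lemma, with the precise depth factor $\max\{0,(1-c)h_K(u)-h\}/h_K(u)$. The idea is to exploit the $C^2_+$-regularity of $\partial K$: near the apex $x(u)\in\partial K$ the boundary admits a local quadratic approximation, and in the regime $h\in[h_K(u)-\epsilon,h_K(u)]$ with $\epsilon$ small the cap on $\partial K_c$, the cross-section $K_c\cap H$, and the simplex inscribed in $\partial K\cap H$ become comparable up to universal dimension-dependent constants. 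The compatibility condition $\epsilon\ge ch_K(u)$ ensures that the max-term is positive on a subinterval of positive length inside the $h$-integration range, so that the resulting bound is not vacuous.
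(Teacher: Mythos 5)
There is a genuine gap, and it sits at the very first step of your plan: you drop the term $-2\,\EE[\cHn(\partial P_N\cap (1-c)K)]$ and try to bound $2\,\EE[\cHn(\partial((1-c)K)\setminus P_N)]$ on its own. With $c$ fixed by \eqref{choice:c} one has $c\sim N^{-2/(n-1)}$, and a typical facet hyperplane of $P_N$ has $h_K(u)-h$ of the same order as $c\,h_K(u)$; hence a constant fraction of the facets cut into $(1-c)K$, and each such facet removes from $\partial((1-c)K)$ a cap whose $(n-1)$-area is comparable to the area of the facet itself. Already for $K=\BBn$ this makes $\EE[\cHn(\partial((1-c)K)\setminus P_N)]$ of constant order in $N$, not of order $N^{-2/(n-1)}$, so no correct chain of upper bounds starting from your reduction can end at the right-hand side of the lemma (which the remainder of the proof shows to be $O\bigl(n\,N^{-2/(n-1)}\bigr)$ up to $K$-dependent factors). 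Concretely, the step that fails is your final ``geometric estimate'': with $z=h_K(u)-h$, the cap area $\cHn(\partial((1-c)K)\cap H^-)$ is of order $(z-c\,h_K(u))^{(n-1)/2}$, which for typical configurations is comparable to $\cHn([x_1,\ldots,x_n])\sim z^{(n-1)/2}$, whereas your claimed bound carries the additional small factor $n\,z/h_K(u)$; the two sides differ by a factor of order $h_K(u)/(n\,z)\sim N^{2/(n-1)}$, which is exactly the loss described above. (The bound also degenerates as the simplex $[x_1,\ldots,x_n]$ does, while the cap area does not depend on the $x_j$ at all.)

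The small factor $\max\{0,(1-c)h_K(u)-h\}/h_K(u)$ in the lemma is produced only by cancellation between the two retained terms. In the paper, conditionally on $0\in P_N$ (the event $0\notin P_N$ contributing an exponentially small error), $\RR^n$ is decomposed into the cones over the facets, and within each cone the cap piece is compared with the facet piece: the inequality from \cite{HoehnerSchuettWerner},
\begin{align*}
\cHn\bigl(\partial((1-c)K)\cap H^-\cap\cone(x_1,\ldots,x_n)\bigr)\le\Bigl(\tfrac{(1-c)h_K(u)}{h}\Bigr)^{n-1}\cHn\bigl((1-c)K\cap[x_1,\ldots,x_n]\bigr),
\end{align*}
gives for the \emph{difference} of the two pieces the factor $\bigl((1-c)h_K(u)/h\bigr)^{n-1}-1\le a\,n\,\bigl((1-c)h_K(u)-h\bigr)/h$, and only then do Proposition~\ref{lem:Zähle} and $1/h\le 2/h_K(u)$ (valid on $h\ge h_K(u)-\epsilon$ with $\epsilon\le h_K(u)/n$) yield the stated bound; the range $h\le h_K(u)-\epsilon$ is again exponentially negligible. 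Your facet/union-bound bookkeeping, the use of the Blaschke--Petkantschin formula and the role of $\epsilon\ge c\,h_K(u)$ are in the right spirit, but without keeping the term $\cHn\bigl((1-c)K\cap H\cap[x_1,\ldots,x_n]\bigr)$ and subtracting it cone by cone, the crucial depth factor cannot appear. A further minor point: absorbing the density $f$ into the constant via $\sup_{\partial K} f$ would make $a$ depend on $f$ and $K$, contrary to its role as an absolute constant; the $f$-dependence has to be carried along (as in the measures $\PP_{f_{\partial K\cap H}}$) and is only removed later when the specific density $f_n$ of \eqref{eq:Deff_n} is inserted.
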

\begin{proof}
With the choice \eqref{choice:c} of the parameter $c$ we obtain 
\begin{align*}
&\EE[\cHn(\partial ((1-c)K) \cap P_N)] + \EE[\cHn(\partial ((1-c)K) \cap P_N^c)]\\
&\qquad = \EE[\cHn(\partial P_N \cap (1-c)K)] + \EE[\cHn(\partial P_N \cap ((1-c)K)^c] 
\end{align*}
and thus
\begin{align*}
&\EE[\Delta_s((1-c)K, P_N)]\\
&\qquad = \EE[\cHn(\partial ((1-c)K) \cap P_N^c)] + \EE[\cHn(\partial P_N \cap ((1-c)K)^c]\\
&\qquad \qquad - \EE[\cHn(\partial P_N \cap (1-c)K] -  \EE[\cHn(\partial ((1-c)K) \cap P_N)]\\
&\qquad = 2 \left(\EE[\cHn(\partial ((1-c)K) \cap P_N^c)]  -  \EE[\cHn(\partial P_N \cap (1-c)K] \right).
\end{align*} 
We refer to \cite{HoehnerSchuettWerner} for a similar computation. Therefore,
\begin{align*}
&\EE[\Delta_s((1-c)K, P_N)]\\
&\quad = 2\int\limits_{\partial K} \cdots \int\limits_{\partial K} \cHn(\partial ((1-c)K) \cap P_N^c) - \cHn(\partial P_N \cap (1-c)K) \\
&\hspace{5cm}  \times\PP_f(\dint x_1) \ldots  \PP_f(\dint x_N)\\
&\quad \le  2\int\limits_{\partial K} \cdots \int\limits_{\partial K}  \cHn(\partial ((1-c)K) \cap P_N^c) \mathds{1}_{\{0\in P_N\}} \, \PP_f(\dint x_1) \ldots \PP_f(\dint x_N)\\
&\quad \qquad +  2\int\limits_{\partial K} \cdots \int\limits_{\partial K}  \cHn(\partial ((1-c)K) \cap P_N^c) \mathds{1}_{\{0\notin P_N\}} \, \PP_f(\dint x_1) \ldots  \PP_f(\dint x_N)\\ 
&\quad \qquad -  2\int\limits_{\partial K} \cdots \int\limits_{\partial K} \cHn(\partial P_N \cap (1-c)K)  \mathds{1}_{\{0\in P_N\}}\, \PP_f(\dint x_1) \ldots  \PP_f(\dint x_N)\\ 
&\quad \le  2\int\limits_{\partial K} \cdots \int\limits_{\partial K}  \cHn(\partial ((1-c)K) \cap P_N^c) \mathds{1}_{\{0\in P_N\}} \, \PP_f(\dint x_1) \ldots  \PP_f(\dint x_N)\\
&\quad \qquad -  2\int\limits_{\partial K} \cdots \int\limits_{\partial K} \cHn(\partial P_N \cap (1-c)K)  \mathds{1}_{\{0\in P_N\}} \, \PP_f(\dint x_1) \ldots  \PP_f(\dint x_N)\\ 
&\quad \qquad +  2\, \cHn(\partial K)\, 	\PP_f^N(\{0 \notin [x_1,\ldots,x_N]\}).
\end{align*}
Since the density function $f$ is strictly positive and since the origin is contained in the interior of $K$, it is a standard argument in random polytope theory which implies that
$$
\PP_f^N(\{0 \notin [x_1,\ldots,x_N]\}) \leq e^{-aN},
$$
see \cite{SchuettWerner2003}. In the course of the proof we will see that the difference between the first and the second summand above is of order $N^{-\frac{2}{n-1}}$ and thus it is enough to consider this difference in what follows and to neglect the third, exponentially small term from now on.

For a polytope $P\subset\RR^n$ we denote by $\cF_{n-1}(P)$ the set of all facets of $P$. Also, for $x_1,\ldots,x_n\in\RR^n$ we let 
\begin{align*}
{\rm cone}(x_1,\ldots,x_n) := \left\{\sum_{i=1}^{n} a_i\, x_i : a_i \ge 0, 1\le i\le n   \right\}
\end{align*}
be the cone generated by $x_1,\ldots,x_n$. For a subset $\{j_1,\ldots,j_n\}\subseteq\{1,\ldots,N\}$ we define two functions $\Phi_{j_1,\ldots,j_n} : (\partial K)^n\rightarrow \RR$ and $\Psi_{j_1,\ldots,j_n} : (\partial K)^n \rightarrow \RR$ as follows. We put
$$
\Phi_{j_1,\ldots,j_n} (x_1,\ldots,x_N) := \cHn(\partial((1-c)K) \cap P_N^c \cap \text{cone}(x_{j_1},\ldots, x_{j_n})) 
$$
if $[x_{j_1},\ldots,x_{j_n}]$ is a facet of $P_N$  (i.e.\ if $[x_{j_1},\ldots,x_{j_n}]\in \mathcal{F}_{n-1}(P_N)$) and $0\in P_N$. In the other case that $[x_{j_1},\ldots,x_{j_n}]\notin\mathcal{F}_{n-1}(P_N)$ or $0\notin P_N$ we define $\Phi_{j_1,\ldots,j_n} (x_1,\ldots,x_N)=0$. Similarly, we let
$$
\Psi_{j_1,\ldots,j_n} (x_1,\ldots,x_N) := \cHn(\partial(1-c)K \cap [x_{j_1},\ldots, x_{j_n}]),
$$
provided that $[x_{j_1},\ldots,x_{j_n}] \in \mathcal{F}_{n-1}(P_N)$ and $0 \in P_N$, and put $\Psi_{j_1,\ldots,j_n} (x_1,\ldots,x_N)$ to be zero otherwise.
Conditioned on the event that $0\in P_N$, with probability one we have that $\RR^n$ can be written as the disjoint union
\begin{align*}
\RR^n = \;\;\;\;\;\;\;\;\;\;\;\;\;\;\cdot \hspace{-12pt}\hspace{-1.25cm}\bigcup\limits_{[x_{j_1},\ldots,x_{j_n}] \in \mathcal{F}_{n-1}(P_N)} \text{cone}(x_{j_1},\ldots, x_{j_n}).
\end{align*}
Moreover,
\begin{align*}
&\PP_f^{N-n}(\{(x_{n+1},\ldots,x_N) :  [x_{1},\ldots,x_{n}] \in \mathcal{F}_{n-1}(P_N) \text{ and } 0 \in P_N \})= \left(\PP_f(\partial K \cap H^+)\right)^{N-n},
\end{align*}
where $H=H(x_1,\ldots,x_n)$ is the hyperplane spanned by the points $x_1,\ldots,x_n$ and we recall the definition of $\PP_f(\partial K \cap H^+)$ given in \eqref{H+}. We also notice that the random polytopes $P_N$ are simplicial with probability one. Therefore, and since the set where $H$ is not well defined has measure zero and all $N$ points are independent and identically distributed, we arrive at 
\begin{align*}
&\int\limits_{\partial K} \cdots \int\limits_{\partial K}  \cHn(\partial ((1-c)K) \cap P_N^c) \mathds{1}_{\{0\in P_N\}} \,\PP_f(\dint x_1) \ldots  \PP_f(\dint x_N)\\
&\quad \qquad -  \int\limits_{\partial K} \cdots \int\limits_{\partial K} \cHn(\partial P_N \cap (1-c)K)  \mathds{1}_{\{0\in P_N\}} \, \PP_f(\dint x_1) \ldots \PP_f(\dint x_N)\\ 
&  =\int\limits_{\partial K} \cdots \int\limits_{\partial K} \sum_{\{j_1,\ldots,j_n\} \subseteq \{1,\ldots, N\}} \!\!\!\!\!\![\Phi_{j_1,\ldots,j_n} (x_1,\ldots,x_N) -  \Psi_{j_1,\ldots,j_n} (x_1,\ldots,x_N)]\,\PP_f(\dint x_1) \ldots  \PP_f(\dint x_n)\\
& = \binom{N}{n} \int\limits_{\partial K} \cdots \int\limits_{\partial K} [\Phi_{1,\ldots,n} (x_1,\ldots,x_N) - \Psi_{1,\ldots,n} (x_1,\ldots,x_N)] \, \PP_f(\dint x_1) \ldots  \PP_f(\dint x_N)\\
& = \binom{N}{n} \int\limits_{\partial K} \cdots \int\limits_{\partial K} \left(\PP_f(\partial K \cap H^+)\right)^{N-n}\big[\cHn(\partial((1-c)K) \cap H^- \cap \text{cone}(x_{1},\ldots, x_{n})) \\
&\qquad\qquad\qquad\qquad\qquad - \cHn((1-c)K \cap H \cap [x_{1},\ldots, x_{n}])\big]\, \PP_f(\dint x_1) \ldots  \PP_f(\dint x_n).
\end{align*}
Next, we apply the Blaschke-Petkantschin-type formula presented in Proposition \ref{lem:Zähle} to the last integral expression. For large enough $N$ this leads to the upper bound
\begin{align*}
&\EE[\Delta_s((1-c)K, P_N)]\\
&\quad \le a\, \binom{N}{n}\, (n-1)! \int\limits_{\SSn} \int\limits_0^\infty \int\limits_{\partial K \cap H} \cdots \int\limits_{\partial K \cap H} \left(\PP_f(\partial K \cap H^+)\right)^{N-n} \,\cHn([x_1,\ldots,x_n])\\
&\quad  \times \big[\cHn(\partial((1-c)K) \cap H^- \cap \text{cone}(x_{1},\ldots, x_{n})) - \cHn((1-c)K \cap H \cap [x_{1},\ldots, x_{n}])\big]\\
&\quad  \times  \prod_{j=1}^{n} l_{H}(x_j) \,\PP_{f_{\partial K \cap H}}(\dint x_1) \cdots  \PP_{f_{\partial K \cap H}} (\dint x_n) \dint h \cHn(\dint u).
\end{align*}
We note that for fixed $u\in\SSn$ the integrand on the right-hand side can be non-zero if and only if $0\leq h\leq h_K(u)$, where we recall that $h_K(u)$ is the support function of $K$ in direction $u$. The same argument as in \cite[Page 9]{LudwigSchuettWerner}, \cite[Page 2255]{Reitzner} or \cite{GroteWerner} show that it is enough to consider $h$ for which $h_K(u)-\varepsilon\leq h\leq h_K(u)$, where $\epsilon > 0$ is sufficiently small. In fact, the integral over the remaining interval $[0,h_K(u)-\varepsilon]$ decays exponentially fast in $N$. In particular, for sufficiently large $N$ we can choose $\epsilon$ such that
\begin{align*} 
c h_K(u) \leq \epsilon \le {h_K(u)\over n},
\end{align*}
where $c$ is as in \eqref{c}. From \cite[Page 8]{HoehnerSchuettWerner} we have the inequality 
\begin{align*}
&\cHn(\partial((1-c)K) \cap H^- \cap \text{cone}(x_{1},\ldots, x_{n}))\\
&\qquad \le \left(\frac{(1-c)h_K(u)}{h}\right)^{n-1} \cHn((1-c)K \cap [x_1,\ldots,x_n]),
\end{align*}
which implies that
\begin{align*}
&\cHn(\partial((1-c)K) \cap H^- \cap \text{cone}(x_{1},\ldots, x_{n})) - \cHn((1-c)K \cap H \cap [x_{1},\ldots, x_{n}])\\
&\qquad \le \max \left\{ \left(\frac{(1-c)h_K(u)}{h}\right)^{n-1} - 1, 0\right\} \,\cHn((1-c)K \cap [x_1,\ldots,x_n]).
\end{align*}
Since  $c$ is of the order $N^{- \frac{2}{n-1}}$ and $\epsilon \le h_K(u)/n$, for sufficiently large $N$, we get
\begin{align*}
\frac{(1-c) h_K(u) -h}{h} \le \frac{(1-c) h_K(u) - h_K(u) + \epsilon}{h_K(u) - \epsilon} \le  \frac{\frac{1}{n} - c}{1-\frac{1}{n}} \le \frac{1}{n-1}.
\end{align*}
Thus,
\begin{align*}
&\left(\frac{(1-c)h_K(u)}{h}\right)^{n-1} - 1\\
&\qquad = \left( \frac{h+(1-c)h_K(u)-h}{h} \right)^{n-1} - 1\\
&\qquad = \left(1 + \frac{(1-c)h_K(u) -h}{h}\right)^{n-1}  - 1\\
&\qquad = (n-1)  \frac{(1-c) h_K(u) -h}{h} + \frac{(n-1)(n-2)}{2} \left(\frac{(1-c) h_K(u) -h}{h}\right)^2  + \ldots  \\
&\hspace{4cm} \ldots + \Big(\frac{(1-c) h_K(u) -h}{h}\Big)^{n-1}\\
&\qquad \le (n-1)  \frac{(1-c) h_K(u) -h}{h} \cdot \sum_{k=0}^{\infty} \frac{n^k}{k!} \left(\frac{(1-c) h_K(u) -h}{h}\right)^k\\
&\qquad \le (n-1) \exp\left(\frac{n}{n-1}\right)\, \frac{(1-c) h_K(u) -h}{h}\\
&\qquad\le a\, n\, \frac{(1-c)h_K(u) - h}{h}.
\end{align*}
As a consequence, for sufficiently large $N$,
\begin{align*}
&\cHn(\partial((1-c)K) \cap H^- \cap \text{cone}(x_{1},\ldots, x_{n})) - \cHn((1-c)K \cap H \cap [x_{1},\ldots, x_{n}])\\
&\qquad \qquad \le a\, n\, \frac{1}{h}\, \cHn([x_1,\ldots,x_n]) \, \max \{0, (1-c)h_K(u) - h\}\\
&\qquad \qquad \le a\, n\, \frac{1}{h_K(u)}\, \cHn([x_1,\ldots,x_n]) \, \max \{0, (1-c)h_K(u) - h\},
\end{align*}
since $\frac{1}{h} \le \frac{1}{(1-1/n) h_K(u)} = \frac{n}{n-1} \frac{1}{h_K(u)} \le 2 \frac{1}{h_K(u)}$.
This proves the lemma.
\end{proof}

\subsection{A bound for the inner integral}

In a next step we consider the inner integral over $\partial K\cap H$ appearing in Lemma \ref{Schritt1}. The following upper bound has been derived in  \cite{GroteWerner}. In what follows we use for $k\in\NN$ the notation 
$$
\omega_{k}:=\cH^{k-1}(\mathbb{S}^{k-1}) = {2\pi^{k/2}\over\Gamma({k\over 2})}\qquad\text{and}\qquad\kappa_k:=\vol_k(\BB^k) = {\pi^{k/2}\over\Gamma({k\over 2}+1)}.
$$

\begin{lem}\label{Schritt2}
Fix $u\in\SSn$ and let $x(u)\in\partial K$ be the point with outer unit normal vector $u$. Let $H$ be a hyperplane orthogonal to $u$ at distance $h$ and put $z := h_K(u) - h$. Then, for all sufficiently small $\delta > 0$, 
	\begin{align*}
	&\int\limits_{\partial K \cap H} \cdots \int\limits_{\partial K \cap H} (\cHn([x_1,\ldots,x_n]))^2 \prod_{j=1}^{n} l_{H}(x_j)\, \PP_{f_{\partial K \cap H}}(\dint x_1) \cdots  \PP_{f_{\partial K \cap H}} (\dint x_n)\\
	&\le (1+\delta)^{\frac{n(n+3)}{2}}\,  2^{\frac{n^2-n-2}{2}}\,  z^{\frac{n^2-n-2}{2}} \frac{n \   \omega_{n-1}^n}{(n-1)!\, (n-1)^{n-1}}\, f(x(u))^n\, \kappa(x(u))^{-\frac{n}{2} - 1}+ \delta O(z^{\frac{n^2-n-2}{2}}),
	\end{align*}
	where the constant in the $O(\,\cdot\,)$-term can be chosen independently of $x(u)$ and $\delta$.
\end{lem}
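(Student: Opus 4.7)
The plan is to reduce the $n$-fold integral to an integral over the unit sphere $\mathbb{S}^{n-2}$ by a local affine change of variables centred at $x(u)$, and then to invoke the classical formula for the expected squared $(n-1)$-volume of a random simplex whose vertices are uniform on $\mathbb{S}^{n-2}$.

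First I would fix an orthonormal frame so that $x(u)$ is at the origin and $u=e_n$, writing $\partial K$ locally as the graph of a $C^2$ convex function $\phi:U\to\RR$ with $\phi(0)=0$, $\nabla\phi(0)=0$, and Hessian $A:=D^2\phi(0)$ positive definite, satisfying $\det A=\kappa(x(u))$. Since the hyperplane $H$ is the translate $\{y_n=h_K(u)-z\}$ of the tangent hyperplane, the slice $\partial K\cap H$ is graphically the level set $\{\phi=z\}$, which by Taylor expansion approximates
\begin{equation*}
E_z:=\bigl\{y\in\RR^{n-1}:\langle Ay,y\rangle=2z\bigr\}
\end{equation*}
with relative error $O(\sqrt{z})$ uniformly on a fixed neighbourhood of $0$. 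By continuity of $f$, of $\kappa$, and of the normal map, each continuous quantity evaluated at $x_j$ can be replaced by its value at $x(u)$ up to a factor $1\pm\delta$ when $z$ is sufficiently small; these replacements, together with the Jacobian relating the $(n-2)$-Hausdorff measures on $\partial K\cap H$ and on $E_z$ and the normalisation constant of $\PP_{f_{\partial K \cap H}}$, contribute collectively the factor $(1+\delta)^{n(n+3)/2}$ in the bound, whereas the residual $O(\sqrt{z})$ Taylor error is absorbed into the additive term $\delta\,O(z^{(n^2-n-2)/2})$ by choosing $z$ small enough.

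Second I would diagonalise $A$ in the principal-curvature frame so that $E_z=\{\sum_{i=1}^{n-1}\lambda_iy_i^2=2z\}$ with $\prod_i\lambda_i=\kappa(x(u))$, and apply the linear map $T:y\mapsto\bigl(\sqrt{\lambda_i/(2z)}\,y_i\bigr)_{i=1}^{n-1}$, which carries $E_z$ onto $\mathbb{S}^{n-2}\subset\RR^{n-1}$. Under $T^{-1}$ the squared $(n-1)$-volume of an inscribed simplex picks up the factor $\det(T^{-1})^2=(2z)^{n-1}\kappa(x(u))^{-1}$, the $(n-2)$-Hausdorff measure on $E_z$ pulls back to a density on $\mathbb{S}^{n-2}$, and the projection-length factor $l_H(x_j)=\|{\rm proj}_H N(x_j)\|^{-1}$ takes the leading form $|\nabla\phi(y)|^{-1}\sim|Ay|^{-1}$ on $E_z$, whose integration against the ellipsoidal surface measure produces explicit powers of $z$ and of the principal curvatures. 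A careful bookkeeping of these $n$-fold scalings together with the normalisation $\bigl(\int_{\partial K\cap H}f\,\dint\cH^{n-2}\bigr)^{-n}$ shows that the total prefactor equals $z^{(n^2-n-2)/2}\kappa(x(u))^{-n/2-1}f(x(u))^n$ times an explicit dimensional constant. What remains is essentially the expected squared $(n-1)$-Hausdorff measure of the convex hull of $n$ independent uniform points on $\mathbb{S}^{n-2}$, and a classical formula of Miles (see e.g.\ \cite{SchneiderBook}) supplies the coefficient $\frac{n\,\omega_{n-1}^n}{(n-1)!\,(n-1)^{n-1}}$.

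The main obstacle will be the precise bookkeeping of all Jacobians, especially of the product $\prod_j l_H(x_j)$ whose scaling in $z$ interacts non-trivially with the change of variables $T$ since $l_H(x_j)$ depends on the actual point $x_j$ and not only on $z$; one has to expand it asymptotically and integrate it against the pulled-back ellipsoidal surface measure in order to confirm that the exponent of $z$ works out to $(n^2-n-2)/2$ and that the constant is exactly as claimed. A secondary technical difficulty is the uniformity of the Taylor remainder over all $n$ variables simultaneously, which requires confining the integration to a small fixed neighbourhood of $x(u)$ and verifying that the contribution of the complement fits inside the additive error $\delta\,O(z^{(n^2-n-2)/2})$.
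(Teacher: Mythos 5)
Your overall strategy is the right one, and it is essentially the argument behind this lemma in the literature: the paper itself does not prove the statement but quotes it from \cite{GroteWerner}, where the bound is obtained exactly by the localization you describe (graph representation at $x(u)$, approximation of the slice $\partial K\cap H$ by the indicatrix ellipsoid $E_z$, rescaling to $\SS^{n-2}$, and a Miles-type second moment of a random simplex; indeed $\int_{(\SS^{n-2})^n}\cHn([\theta_1,\ldots,\theta_n])^2\,\dint\sigma(\theta_1)\cdots\dint\sigma(\theta_n)=\omega_{n-1}^n\,\tfrac{n}{(n-1)!\,(n-1)^{n-1}}$, which is where the constant in the lemma comes from). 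As a proof, however, your text has a genuine gap: the entire content of the lemma is the exact exponent of $z$ and the explicit constant, and precisely this computation --- the interaction of $\prod_j l_H(x_j)$ with the ellipsoidal surface element --- is the step you defer to future ``careful bookkeeping''. The point you must make explicit is a cancellation: writing $y=\sqrt{2z}\,A^{-1/2}\theta$ with $\theta\in\SS^{n-2}$, one has $\cH^{n-2}(\dint y)\approx(2z)^{(n-2)/2}\kappa(x(u))^{-1/2}\,|A^{1/2}\theta|\,\dint\sigma(\theta)$ while $l_H(y)\approx(2z)^{-1/2}|A^{1/2}\theta|^{-1}$, so the anisotropic factors $|A^{1/2}\theta|$ cancel and the effective vertex measure becomes a constant multiple of the uniform measure on $\SS^{n-2}$; without this observation the reduction to the second-moment formula for uniform points on the sphere is not justified, since neither the pulled-back surface measure nor $l_H$ alone is uniform.

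There is also an inconsistency in the one piece of bookkeeping you do assert. If the vertices are integrated against the normalised probability measures $\PP_{f_{\partial K\cap H}}$ and you genuinely divide by $\bigl(\int_{\partial K\cap H}f\,\dint\cH^{n-2}\bigr)^{n}$, then the factors $f(x(u))^{n}$ cancel and, since $\int_{\partial K\cap H}f\,\dint\cH^{n-2}$ is itself of order $z^{(n-2)/2}$, the leading power of $z$ drops from $z^{(n^2-n-2)/2}$ to $z^{(n-2)/2}$; your claimed prefactor $z^{(n^2-n-2)/2}\kappa(x(u))^{-n/2-1}f(x(u))^{n}$ cannot come out of that normalised computation. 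The stated right-hand side (homogeneous of degree $n$ in $f$) corresponds to integrating the vertices against the weighted Hausdorff measure $f(x_j)\,\cH^{n-2}(\dint x_j)$, which is the form in which the inner integral actually enters Lemma \ref{Schritt1} and is used in Lemma \ref{Schritt2a}. To make your argument correct you must either work with $f\,\dint\cH^{n-2}$ throughout or carry the normalising constant along explicitly and see where it is reinstated; as written, the conclusion you draw from the scalings does not follow from the computation you describe.
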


\subsection{A decomposition into two terms $T_1$ and $T_2$}

Using the upper bound provided in Lemma \ref{Schritt2}, we now decompose the expected surface area deviation between $(1-c)K$ and $P_N$ into two integral terms, which will be treated separately afterwards. In order to do this, we put $s := \PP_f(\partial K \cap H^-)$, or, in other words, $\PP_f(\partial K \cap H^+) = 1-s$. Also recall the definition of $z$ from Lemma \ref{Schritt2}.

\begin{lem}\label{Schritt2a}
For sufficiently large $N$ and sufficiently small $\delta > 0$, we have 
\begin{align*}
\EE[\Delta_s((1-c)K, P_N)] \le T_1+T_2
\end{align*}
with the terms $T_1$ and $T_2$ given by
\begin{align*}
T_1 &:= (1+\delta)^{\frac{3n^2 + 3n}{2}}\, a\, \binom{N}{n}\, n^2\, \int\limits_{\SSn}  \frac{1}{h_K(u) \kappa(x(u))} \\
&\qquad\qquad\qquad\times \int\limits_{0}^{1} (1-s)^{N-n}\, s^{n-1}\, (z-ch_K(u))\, \dint s \cHn(\dint u)
\end{align*}
and 
\begin{align*}
T_2 &:= (1+\delta)^{\frac{3n^2 + 3n}{2}}\, a\, \binom{N}{n}\, n^2\, \int\limits_{\SSn}  \frac{1}{h_K(u) \kappa(x(u))}\\
&\qquad \qquad \times \int\limits_{0}^{s(c h_K(u))} (1-s)^{N-n}\, s^{n-1}\, (ch_K(u) - z)\, \dint s \cHn(\dint u).
\end{align*}
Here, $z=z(s)$ and $s(ch_K(u)) = \int\limits_{\partial K \cap H^-} f(x) \,\cHn(\dint x)$, where $H$ is the unique hyperplane with unit normal vector $u\in \SSn$ and distance $(1-c)h_K(u)$ from the origin, and $H^-$ the half-space bounded by $H$ not containing the origin.  
\end{lem}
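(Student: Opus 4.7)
The plan is to plug the bound from Lemma \ref{Schritt2} into the bound from Lemma \ref{Schritt1}, change variables in the outer integral from $h$ (equivalently $z=h_K(u)-h$) to $s:=\PP_f(\partial K\cap H^-)$, and then split the resulting $\max\{0,\cdot\}$-term into the two pieces $T_1$ and $T_2$. Writing $\PP_f(\partial K\cap H^+)^{N-n}=(1-s)^{N-n}$ and absorbing dimension-independent numerical constants into $a$, the substitution yields an upper bound on $\EE[\Delta_s((1-c)K,P_N)]$ whose integrand is proportional to
$$(1-s)^{N-n}\,\frac{\max\{0,(1-c)h_K(u)-h\}}{h_K(u)}\,z^{(n^2-n-2)/2}\,f(x(u))^n\,\kappa(x(u))^{-n/2-1}.$$

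The crucial step is the change of variables $h\mapsto s$. Because $K\in C^2_+$, near $x(u)$ the boundary is locally the graph of a positive-definite quadratic form, and the osculating-paraboloid approximation yields, as $z\downarrow 0$,
$$s\;=\;f(x(u))\,\kappa_{n-1}\,(2z)^{(n-1)/2}\,\kappa(x(u))^{-1/2}\,\bigl(1+O(\delta)\bigr).$$
A direct Jacobian computation shows that $z^{(n^2-n-2)/2}\,\dint z$ then transforms into a dimension constant times $f(x(u))^{-n}\,\kappa(x(u))^{n/2}\,s^{n-1}\,\dint s$ -- the exponents of $s$ and of $2$ arrange themselves so that the final power of $s$ is exactly $n-1$ and the factor $2^{(n^2-n-2)/2}$ from Lemma \ref{Schritt2} is exactly cancelled. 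Collecting terms, the $f(x(u))^n$ factor disappears entirely, the $\kappa$-powers combine to $\kappa(x(u))^{-1}$, and, using the identity $\omega_{n-1}/\kappa_{n-1}=n-1$ to simplify the remaining numerical prefactor, the integrand becomes (up to an additional $(1+\delta)^{n^2}$ penalty from raising the osculating-paraboloid relation to the $n$-th power)
$$\binom{N}{n}\,n^2\,(1-s)^{N-n}\,s^{n-1}\,\frac{\max\{0,(1-c)h_K(u)-h\}}{h_K(u)\,\kappa(x(u))}.$$

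The final step is the exact (pointwise) decomposition
$$\max\{0,(1-c)h_K(u)-h\}\;=\;\max\{0,z-ch_K(u)\}\;=\;(z-ch_K(u))+\max\{0,ch_K(u)-z\}.$$
The first summand yields $T_1$ after extending the $s$-integration from the image of $[h_K(u)-\epsilon,h_K(u)]$ to the full interval $[0,1]$; this enlarges the integral because on the added range one has $z>\epsilon\ge ch_K(u)$, so the added integrand is non-negative. The second summand is supported on $\{z\le ch_K(u)\}=\{s\le s(ch_K(u))\}$ and produces $T_2$ directly.

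The main obstacle will be tracking the $(1+\delta)$-errors through the change of variables. The paraboloid relation for $s$ is valid only up to a multiplicative factor $(1+O(\delta))$, and inverting and raising to the $n$-th power (needed to cancel the $f(x(u))^n$ in the integrand) produces a $(1+\delta)^{n^2}$ penalty on top of the $(1+\delta)^{n(n+3)/2}$ already present in Lemma \ref{Schritt2}; the two combine to the stated $(1+\delta)^{(3n^2+3n)/2}$. The $C^2_+$ regularity of $K$ together with the continuity and strict positivity of $f$ and $\kappa$ on the compact manifold $\partial K$ ensure that these errors can be made uniformly small across $u\in\SSn$, so choosing $\delta$ sufficiently small yields the claim.
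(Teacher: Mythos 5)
Your proposal is correct and follows essentially the same route as the paper: substitute the bound of Lemma \ref{Schritt2} into Lemma \ref{Schritt1}, change variables from $z$ to $s$ via the cap relation $s\asymp f(x(u))\,\kappa_{n-1}\,(2z)^{(n-1)/2}\kappa(x(u))^{-1/2}$ (this is exactly Lemma \ref{lem:Reitzner2}, whose bounds \eqref{z} and \eqref{dz} the paper uses in place of your explicit Jacobian computation, with the same cancellation of the $f^n$, $2$-powers and $\kappa$-powers and the same total $(1+\delta)^{(3n^2+3n)/2}$), and then split off $T_1$ and $T_2$; your pointwise decomposition of the $\max$ plus extension of the $s$-range to $[0,1]$ is the same bookkeeping the paper performs by writing $\int_{s(ch_K(u))}^{1}=\int_0^1-\int_0^{s(ch_K(u))}$, and your handling of the $\delta$-error matches the paper's remark that it contributes the same order and is absorbed since $\delta$ is arbitrary.
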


We prepare the proof of Lemma \ref{Schritt2a} with the following result taken from \cite{Reitzner}. It provides a bound for $s$ (recall the paragraph before Lemma \ref{Schritt2a}) as well as an upper bound for $z=z(s)$ and its derivative (recall the definition of $z$ from Lemma \ref{Schritt2}).
	
\begin{lem}\label{lem:Reitzner2}
Let $u\in\SSn$ and $x(u)\in\partial K$ be the unique point with outer unit normal vector $u$. Then, for all sufficiently small $\delta > 0$, it holds that 
	\begin{align}\label{s}
	\begin{split}
	&(1+\delta)^{-n}\, 2^{\frac{n-1}{2}}\, f(x(u))\, \kappa(x(u))^{- \frac{1}{2}}\, \kappa_{n-1}\, z^{\frac{n-1}{2}}\\ 
	&\qquad \le s \le (1+\delta)^{n+1}\, 2^{\frac{n-1}{2}}\, f(x(u))\, \kappa(x(u))^{- \frac{1}{2}}\, \kappa_{n-1}\, z^{\frac{n-1}{2}}.
	\end{split}
	\end{align}
	Therefore,
	\begin{align}\label{z}
	z \le (1+\delta)^{\frac{2n}{n-1}}\, \frac{\kappa(x(u))^{1\over n-1}\, (n-1)^{\frac{2}{n-1}}}{2\, f(x(u))^{2\over n-1}\, \omega_{n-1}^{2\over n-1}}\, s^{2 \over n-1}
	\end{align}
	and
	\begin{align}\label{dz}
	\frac{\dint z}{\dint s} \le (1+\delta)^{n}\, \frac{\kappa(x(u))^{1\over 2}\, 2^{-\frac{n-3}{2}}}{f(x(u))\, \omega_{n-1}}\, z^{- \frac{n-3}{2}}.
	\end{align}
\end{lem}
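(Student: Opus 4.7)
Since Lemma \ref{lem:Reitzner2} is purely local in nature around the point $x(u) \in \partial K$, the plan is to parameterise a neighbourhood of $x(u)$ in $\partial K$ using the $C_+^2$ hypothesis and to compare the integral defining $s$ with the corresponding integral over the osculating paraboloid at $x(u)$. Concretely, I would translate so that $x(u)$ becomes the origin and rotate so that $u$ is the vertical direction. A neighbourhood of $0$ in $\partial K$ is then the graph of a $C^2$ function $\phi : u^\perp \to \RR$ with $\phi(0) = 0$, $\nabla\phi(0) = 0$, and Hessian $\nabla^2\phi(0)$ equal to the second fundamental form of $\partial K$ at $x(u)$; in particular the eigenvalues of $\nabla^2\phi(0)$ are the principal curvatures and its determinant equals $\kappa(x(u))$, and by the $C_+^2$ assumption this matrix is strictly positive definite.

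For the two-sided bound \eqref{s}, I would observe that the hyperplane $H$ at vertical height $z = h_K(u) - h$ above $x(u)$ cuts off the cap $\partial K \cap H^-$, which projects bijectively onto the sub-level set $D_z := \{y \in u^\perp : \phi(y) \leq z\}$. Taylor's theorem together with continuity of $\nabla^2\phi$ implies that for any prescribed small $\delta > 0$ and all sufficiently small $z$ the inequalities
\begin{align*}
(1+\delta)^{-1}\,\tfrac{1}{2}\langle \nabla^2\phi(0)\,y,\,y\rangle \,\le\, \phi(y) \,\le\, (1+\delta)\,\tfrac{1}{2}\langle \nabla^2\phi(0)\,y,\,y\rangle
\end{align*}
hold on $D_z$. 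This sandwiches $D_z$ between two concentric ellipsoids whose $(n-1)$-volumes both coincide with $\kappa_{n-1}\,(2z)^{(n-1)/2}\,\kappa(x(u))^{-1/2}$ up to factors of $(1+\delta)^{\pm(n-1)/2}$. Incorporating the area element $\sqrt{1+|\nabla\phi|^2} = 1 + O(z)$ on $\partial K$ and the continuity modulus of $f$ on the small set $\partial K \cap H^-$ introduces further multiplicative factors of the form $(1+\delta)^{\pm O(1)}$. A careful count of the individual contributions then produces a two-sided bound for $s$ with $f(x(u))$, $\kappa(x(u))^{-1/2}$, $\kappa_{n-1}$ and $z^{(n-1)/2}$ as its main factors and with exponents $-n$ and $n+1$ of $(1+\delta)$, which is exactly \eqref{s}.

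The inequality \eqref{z} is then a direct algebraic consequence: solving the lower bound in \eqref{s} for $z^{(n-1)/2}$, raising both sides to the power $2/(n-1)$, and rewriting the resulting constant via the identity $(n-1)\,\kappa_{n-1} = \omega_{n-1}$ gives \eqref{z} with the stated exponent $2n/(n-1)$. For \eqref{dz}, I would differentiate $s$ in $z$ directly using the coarea formula applied to $\phi$, namely
\begin{align*}
\frac{\dint s}{\dint z} \,=\, \int\limits_{\{y \,:\, \phi(y) = z\}} \frac{f\,\sqrt{1+|\nabla\phi|^2}}{|\nabla\phi(y)|}\,\cH^{n-2}(\dint y),
\end{align*}
and then apply the same local comparison: since $|\nabla\phi(y)| \approx |\nabla^2\phi(0)\,y|$ is of order $z^{1/2}$ on the level ellipsoid $\{\phi = z\}$, whose $(n-2)$-dimensional Hausdorff measure is of order $z^{(n-2)/2}$, one obtains after rescaling to a unit sphere the lower bound $\dint s/\dint z \ge (1+\delta)^{-n}\,2^{(n-3)/2}\,f(x(u))\,\kappa(x(u))^{-1/2}\,\omega_{n-1}\,z^{(n-3)/2}$. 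Inverting this lower bound yields precisely \eqref{dz}.

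The main technical obstacle is a careful bookkeeping of the multiplicative errors. One must choose a single small neighbourhood of $x(u)$ in $\partial K$ on which the three approximations, namely the Hessian sandwich for $\phi$, the expansion $\sqrt{1+|\nabla\phi|^2} = 1 + O(z)$, and the continuity modulus of $f$, all lie within the same $(1+\delta)$-tolerance, and then propagate these errors through the projection, the area element, the volume of the sandwiching ellipsoids, and finally the inversion and differentiation steps, so that the exponents of $(1+\delta)$ come out exactly as $n$, $n+1$, $2n/(n-1)$ and $n$ in \eqref{s}, \eqref{z} and \eqref{dz}. This bookkeeping is essentially the local cap computation carried out in Reitzner's paper \cite{Reitzner}, to which the lemma is attributed.
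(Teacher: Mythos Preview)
Your sketch is correct, and you have correctly identified the source: the paper does not prove this lemma at all but simply quotes it from Reitzner \cite{Reitzner} (it is introduced as ``the following result taken from \cite{Reitzner}''). Your outline---local graph parameterisation at $x(u)$, sandwiching the cap between two ellipsoids via the second-order Taylor expansion of $\phi$, incorporating the area element and the continuity of $f$ as further $(1+\delta)$-factors, then solving \eqref{s} for $z$ using $(n-1)\kappa_{n-1}=\omega_{n-1}$ to obtain \eqref{z}, and differentiating via the coarea formula to obtain \eqref{dz}---is exactly the local cap computation Reitzner carries out, and your constant-tracking checks out. There is nothing to compare against in the present paper beyond the citation.
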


\begin{proof}[Proof of Lemma \ref{Schritt2a}]
Observe first that $\max \{0, (1-c)h_K(u) - h\} = 0$ whenever $h > (1-c)h_K(u)$. This observation together with  Lemma \ref{Schritt1},  Lemma \ref{Schritt2} and the substitution $z = h_K(u) - h$ imply that 
\begin{align*}
&\EE[\Delta_s((1-c)K, P_N)]\\
&\le (1+\delta)^{\frac{n(n+3)}{2}}\, a\, 2^{\frac{n^2-n-2}{2}}\, \binom{N}{n}\, \frac{n^2\, \omega_{n-1}^n}{(n-1)^{n-1}} \int\limits_{\SSn} f(x(u))^n\, \kappa(x(u))^{-\frac{n}{2}-1}\\
&\qquad \times \int\limits_{h_K(u) - \epsilon}^{(1-c)h_K(u)} \left(\PP_f(\partial K \cap H^+)\right)^{N-n} z^{\frac{n^2 - n - 2}{2}}  \frac{1}{h_K(u)} ((1-c)h_K(u) - h) \, \dint h \cHn(\dint u)\\
&\quad + \delta \binom{N}{n} n! \int\limits_{\SSn} \int\limits_{h_K(u) - \epsilon}^{(1-c)h_K(u)} \left(\PP_f(\partial K \cap H^+)\right)^{N-n} O(z^{\frac{n^2 - n - 2}{2}})\\
&\qquad \times  \frac{1}{h_K(u)} ((1-c)h_K(u) - h) \,\dint h \cHn(\dint u)\\
&= (1+\delta)^{\frac{n(n+3)}{2}}\, a\, 2^{\frac{n^2-n-2}{2}}\, \binom{N}{n}\, \frac{n^2\, \omega_{n-1}^n}{(n-1)^{n-1}}\, \int\limits_{\SSn} f(x(u))^n\,\kappa(x(u))^{-\frac{n}{2}-1}\\
&\qquad \times \int\limits_{ch_K(u)}^{\epsilon} \left(\PP_f(\partial K \cap H^+)\right)^{N-n} z^{\frac{n^2 - n - 2}{2}}  \frac{1}{h_K(u)} (z - c h_K(u)) \,\dint z \cHn(\dint u)\\
&\quad+ \delta \binom{N}{n} n! \int\limits_{\SSn} \int\limits_{ch_K(u)}^{\epsilon} \left(\PP_f(\partial K \cap H^+)\right)^{N-n} O(z^{\frac{n^2 - n - 2}{2}}) \frac{1}{h_K(u)} (z -c h_K(u))\, \dint z \cHn(\dint u).
\end{align*}
It will turn out that, as $N\to\infty$, both summands are of order $N^{- \frac{2}{n-1}}$. Since $\delta$ can be chosen arbitrarily small, it is enough to consider the first summand in what follows.

We use  \eqref{dz} and then \eqref{z} to change from $z^{\frac{(n-1)^2}{2}}$ to $s^{n-1}$ and obtain that, for sufficiently large $N$, 
\begin{align*}
&\EE[\Delta_s((1-c)K, P_N)]\\
&\quad \le (1+\delta)^{\frac{n(n+3)}{2} + n}\, a\, 2^{\frac{n^2-n-2}{2}}\, 2^{-\frac{n-3}{2}}\, \binom{N}{n}\, \frac{n^2\, \omega_{n-1}^n}{(n-1)^{n-1}}\, \int\limits_{\SSn} f(x(u))^{n-1}\, \kappa(x(u))^{-\frac{n}{2}-\frac{1}{2}}\\
&\qquad \qquad \times \int\limits_{s(ch_K(u))}^{1} (1-s)^{N-n}\, z^{\frac{n^2 - n - 2 - n + 3}{2}}  \frac{1}{h_K(u)} (z - c h_K(u)) \,\dint s \cHn(\dint u)\\
&\quad \le (1+\delta)^{\frac{n^2 + 5n}{2}}\, a\, 2^{\frac{n^2-2n+1}{2}}\,  \binom{N}{n}\, \frac{n^2\, \omega_{n-1}^{n-1}}{(n-1)^{n-1}}\, \int\limits_{\SSn} f(x(u))^{n-1}\, \kappa(x(u))^{-\frac{n}{2}-\frac{1}{2}}\\
&\qquad \qquad \times \int\limits_{s(ch_K(u))}^{1} (1-s)^{N-n}\, z^{\frac{(n-1)^2}{2}}  \frac{1}{h_K(u)} (z - c h_K(u)) \,\dint s \cHn(\dint u)\\
&\quad \le (1+\delta)^{\frac{n^2 + 5n}{2} + n(n-1)}\, a\, 2^{\frac{(n-1)^2}{2}}\, 2^{-\frac{(n-1)^2}{2}}\,   \binom{N}{n}\, n^2\, \int\limits_{\SSn}  \kappa(x(u))^{-1}\\
&\qquad \qquad \times \int\limits_{s(ch_K(u))}^{1} (1-s)^{N-n}\, s^{n-1}\,  \frac{1}{h_K(u)} (z - c h_K(u)) \,\dint s \cHn(\dint u)\\
&\quad \le  (1+\delta)^{\frac{3n^2 + 3n}{2}}\, a\, \binom{N}{n}\, n^2\, \int\limits_{\SSn}  \kappa(x(u))^{-1}\\
&\qquad \qquad \times \int\limits_{s(ch_K(u))}^{1} (1-s)^{N-n}\, s^{n-1}\,  \frac{1}{h_K(u)} (z - c h_K(u))\, \dint s \cHn(\dint u)\\
&\quad = (1+\delta)^{\frac{3n^2 + 3n}{2}}\, a\, \binom{N}{n}\, n^2\, \int\limits_{\SSn}  \kappa(x(u))^{-1} \int\limits_{0}^{1} (1-s)^{N-n}\, s^{n-1}\\
&\qquad\qquad\times \frac{1}{h_K(u)} (z-ch_K(u))\, \dint s \cHn(\dint u)+ (1+\delta)^{\frac{3n^2 + 3n}{2}}\, a\, \binom{N}{n}\, n^2\, \int\limits_{\SSn} \kappa(x(u))^{-1}\\
&\qquad \qquad \times \int\limits_{0}^{s(c h_K(u))} (1-s)^{N-n}\, s^{n-1}\,  \frac{1}{h_K(u)} (ch_K(u) - z)\, \dint s \cHn(\dint u).
\end{align*} 
In view of the definitions of the terms $T_1$ and $T_2$ this proves the claim.
\end{proof}

\subsection{A bound for the term $T_1$} 

After having decomposed the original integral expression from Lemma \ref{Schritt1} into the sum of $T_1$ and $T_2$, we are now going to bound each of these terms individually. We start with $T_1$ and at the same time start to specialize our set-up by taking the density function $f$ to be equal to $f_n$, which was defined in \eqref{eq:Deff_n}.

\begin{lem}\label{Schritt4}
For sufficiently large $N$ 
we have that
\begin{align*}
T_1 & \le   a\,n\,N^{-{2\over n-1}}\,\as_n(K)^{2\over n-1}\,\cHn(\partial K)
\end{align*}
with an absolute constant $a\in(0,\infty)$.
\end{lem}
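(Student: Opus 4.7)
The plan is to substitute the explicit density $f=f_n$ from \eqref{eq:Deff_n} into the bound \eqref{z} for $z$, use the trivial inequality $z-ch_K(u)\le z$ to deal with the fact that the integrand can change sign for small $s$, recognise the remaining $s$-integral as a Beta function with the right $N$-asymptotics, and finally apply the change-of-variables formula from Proposition \ref{prop:ChangeOfVariables} to turn the spherical integral into $\cHn(\partial K)$.

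\textbf{Step 1: simplify via $f_n$.} With $f_n(x)=\as_n(K)^{-1}\kappa(x)^{1/2}h_K(u(x))^{-(n-1)/2}$, a direct computation gives
$$
\frac{\kappa(x(u))^{1/(n-1)}}{f_n(x(u))^{2/(n-1)}} = \as_n(K)^{2/(n-1)}\,h_K(u).
$$
Plugging this into \eqref{z} yields the pointwise bound
$$
z \le (1+\delta)^{2n/(n-1)}\,\frac{(n-1)^{2/(n-1)}}{2\,\omega_{n-1}^{2/(n-1)}}\,\as_n(K)^{2/(n-1)}\,h_K(u)\,s^{2/(n-1)}.
$$

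\textbf{Step 2: dominate the $z-ch_K(u)$ factor.} Since $z\ge 0$ and the kernel $s^{n-1}(1-s)^{N-n}$ is non-negative, the inequality $z-ch_K(u)\le z$ is valid everywhere (including where $z<ch_K(u)$, since then the integrand was already negative). Insert the bound from Step 1 and pull the $h_K(u)$ outside; it cancels the $1/h_K(u)$ in $T_1$, leaving $1/\kappa(x(u))$ as the only $u$-dependent factor besides a Beta-type $s$-integral.

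\textbf{Step 3: evaluate the $s$-integral.} The integral reduces to
$$
\int_0^1 s^{n-1+2/(n-1)}(1-s)^{N-n}\,\dint s
= B\!\left(n+\tfrac{2}{n-1},\,N-n+1\right)
= \frac{\Gamma(n+\frac{2}{n-1})\,\Gamma(N-n+1)}{\Gamma(N+1+\frac{2}{n-1})}.
$$
Multiplying by $\binom{N}{n}n^2$ and using $\Gamma(N+1)/\Gamma(N+1+\frac{2}{n-1})\sim N^{-2/(n-1)}$, one obtains a prefactor
$$
\binom{N}{n}\,n^2\,B\!\left(n+\tfrac{2}{n-1},\,N-n+1\right)
\;\sim\; \frac{n^2\,\Gamma(n+\frac{2}{n-1})}{n!}\,N^{-2/(n-1)}
\;\le\; a\,n\,N^{-2/(n-1)},
$$
because $\Gamma(n+\frac{2}{n-1})/(n-1)!\to 1$ as $n\to\infty$, so that $n^2\Gamma(n+\frac{2}{n-1})/n!\asymp n$.

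\textbf{Step 4: the spherical integral.} What remains is $\int_{\SSn}\kappa(x(u))^{-1}\cHn(\dint u)$. By Proposition \ref{prop:ChangeOfVariables} applied to $g(x)=\kappa(x)^{-1}$,
$$
\int_{\SSn}\frac{1}{\kappa(x(u))}\cHn(\dint u) = \int_{\partial K}\frac{1}{\kappa(x)}\,\kappa(x)\,\cHn(\dint x) = \cHn(\partial K).
$$
Combining Steps 1--4 and absorbing the dimensional constants $(1+\delta)^{(3n^2+3n)/2}$, $\omega_{n-1}^{-2/(n-1)}$, $(n-1)^{2/(n-1)}$ into the absolute constant $a$ (for $\delta$ small and $N$ large) delivers
$$
T_1 \le a\,n\,N^{-2/(n-1)}\,\as_n(K)^{2/(n-1)}\,\cHn(\partial K).
$$

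The main obstacle will be the careful bookkeeping in Step 3: one has to confirm that the competing gamma-, $\omega$-, and $(n-1)$-factors scattered across \eqref{z} and the prefactor of $T_1$ collapse into a \emph{single} power of $n$ and not into a larger polynomial in $n$, since the statement of Lemma \ref{Schritt4} — and ultimately of Theorem \ref{mainresult} — records exactly one factor $n$.
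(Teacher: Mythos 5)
Your Steps 1, 3 and 4 are individually fine (the identity $\kappa^{1/(n-1)}/f_n^{2/(n-1)}=\as_n(K)^{2/(n-1)}h_K(u)$, the Beta-function asymptotics, and $\int_{\SSn}\kappa(x(u))^{-1}\,\cHn(\dint u)=\cHn(\partial K)$), but the argument has a genuine gap exactly at the point you flagged: the factor $\omega_{n-1}^{-2/(n-1)}$ inherited from \eqref{z} is \emph{not} an absolute constant and cannot be absorbed into $a$. Since $\omega_{n-1}=2\pi^{(n-1)/2}/\Gamma(\tfrac{n-1}{2})$, Stirling gives $\omega_{n-1}^{2/(n-1)}\asymp 1/n$ (the paper records exactly this just before it passes from $n^2$ to $n^3$ in its own computation), so $\omega_{n-1}^{-2/(n-1)}$ grows linearly in $n$. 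Hence after your Step 2 the full prefactor is $n^2\cdot\omega_{n-1}^{-2/(n-1)}\cdot\binom{N}{n}B\bigl(n+\tfrac{2}{n-1},N-n+1\bigr)\asymp n^2\,N^{-2/(n-1)}$, and discarding the subtracted term via $z-ch_K(u)\le z$ can only yield $T_1\le a\,n^2\,N^{-2/(n-1)}\as_n(K)^{2/(n-1)}\cHn(\partial K)$ --- one power of $n$ too many. This is not curable by more careful bookkeeping: \eqref{z} is sharp up to $(1+\delta)$-factors (it comes from the two-sided bound \eqref{s}), so the positive part of $T_1$ genuinely is of order $n^2N^{-2/(n-1)}$.

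The missing idea is that the term $-c\,h_K(u)$ must be retained and made to nearly cancel the main term. The paper bounds $c$ from below (the inequality following \eqref{c}), carries both contributions through the change of variables and the Gamma-asymptotics, and only then inserts $f=f_n$ from \eqref{eq:Deff_n}: the positive term produces $\as_n(K)^{2/(n-1)}\cHn(\partial K)$, while the negative term produces $\as_n(K)^{2/(n-1)}\int_{\partial K}h_K(u(x))\,H(x)\,\cHn(\dint x)$, which equals $\as_n(K)^{2/(n-1)}\cHn(\partial K)$ by Minkowski's integral formula \eqref{eq:MinkowskiOriginal}. The resulting bracket $\bigl[(1+\delta)^{2n/(n-1)}-(1-\tfrac1n)\bigr]$ is of order $1/n$ once $\delta$ is chosen small, and this gain of $1/n$ is precisely what converts the $n^2$ you obtain into the single factor $n$ claimed in Lemma \ref{Schritt4} and needed for Theorem \ref{mainresult}. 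To repair your proof you would have to reinstate the $-c\,h_K(u)$ term, use the lower bound on $c$ together with \eqref{eq:MinkowskiOriginal} after substituting $f_n$, and only pass to crude bounds after this cancellation has been exploited.
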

\begin{proof}
We apply  \eqref{z} and  \eqref{AbschätzungfürC}, to get that for all sufficiently small $\delta > 0$ and sufficiently large $N$,
\begin{align*}
T_1 & \le (1+\delta)^{\frac{3n^2 + 3n}{2}}\, a\,  \binom{N}{n}\, \frac{n^2}{2}\, \frac{(n-1)^{\frac{2}{n-1}}}{\omega_{n-1}^{2\over n-1}} \\ &\qquad\times\Bigg[
\  (1+\delta)^\frac{2n}{n-1} \int\limits_{\SSn}  \frac{1}{h_K(u)} \frac{\kappa(x(u))^{-1 + \frac{1}{n-1}}}{f(x(u))^{2\over n-1}} \,\cHn(\dint u)\int\limits_{0}^{1} (1-s)^{N-n}\, s^{n-1 + \frac{2}{n-1}}\, \dint s \\
&\qquad\qquad - \left(1-\frac{1}{n}\right) N^{-\frac{2}{n-1}}\, \frac{\Gamma(n+\frac{2}{n-1})}{(n+1) (n-2)!} \frac{1}{\cHn(\partial K)} \int\limits_{\partial K} \frac{\kappa(x)^{\frac{1}{n-1}}}{f(x)^{2 \over n-1}} H(x) \,\cHn(\dint x)\\
&\qquad \qquad\qquad \times \int\limits_{\SSn} \kappa(x(u))^{-1}\,\cHn(\dint x)\int\limits_{0}^{1} (1-s)^{N-n}\, s^{n-1}\, \dint s  \  \Bigg].
\end{align*}
For $u \in \SSn$ let $x=x(u) \in \partial K$ be the point with outer unit normal vector $u$. Using the change-of-variables formula \eqref{nvol} yields that
\begin{align*}
\cHn(\partial K) = \int\limits_{\SSn} \frac{h_K(u) }{\kappa(x(u))} \,\cHn(\dint u)
\end{align*}
and 
$$
\int\limits_{\SSn}  \frac{1}{h_K(u)} \frac{\kappa(x(u))^{-1 + \frac{1}{n-1}}}{f(x(u))^{2\over n-1}}\,\cHn(\dint u) = \int\limits_{\partial K}  \frac{1}{h_K(u(x))} \frac{\kappa(x)^{ \frac{1}{n-1}}}{f(x)^{2\over n-1}}\,\cHn(\dint x).
$$
Together with the observation that
$$
\omega_{n-1}^{2\over n-1} \sim \frac{1}{n}\qquad\text{and}\qquad(n-1)^{2\over n-1} \le 2
$$
we get
\begin{align}
\nonumber &T_1  \le (1+\delta)^{\frac{3n^2 + 3n}{2}}\, a\,  \binom{N}{n}\, \frac{n^3}{2}\\ 
\nonumber &\quad \times \Bigg[ (1+\delta)^\frac{2n}{n-1}  \frac{\Gamma(N-n+1) \Gamma\left(n+ \frac{2}{n-1}\right)}{\Gamma\left(N+1+\frac{2}{n-1}\right)} \int\limits_{\partial K}  \frac{1}{h_K(u(x))} \frac{\kappa(x)^{\frac{1}{n-1}}}{f(x)^{2\over n-1}} \,\cHn(\dint x)\\
\nonumber &\quad\quad - \left(1-\frac{1}{n}\right) N^{-\frac{2}{n-1}}\,\frac{\Gamma(N-n+1) \Gamma\left(n\right)}{\Gamma\left(N+1\right)} \frac{\Gamma(n+\frac{2}{n-1})}{(n+1) (n-2)!} \int\limits_{\partial K} \frac{\kappa(x)^{\frac{1}{n-1}}}{f(x)^{2 \over n-1}} H(x)\,\cHn(\dint x)\Big]\\
\nonumber & \le (1+\delta)^{\frac{3n^2 + 3n}{2}}\, a\,  \binom{N}{n}\, \frac{n^3}{2} \frac{\Gamma(N-n+1) \Gamma\left(n+ \frac{2}{n-1}\right)}{\Gamma\left(N+1+\frac{2}{n-1}\right)}\\ 
\nonumber &\quad \times \Bigg[ (1+\delta)^\frac{2n}{n-1}  \int\limits_{\partial K}  \frac{1}{h_K(u(x))} \frac{\kappa(x)^{\frac{1}{n-1}}}{f(x)^{2\over n-1}} \,\cHn(\dint x)\\
\nonumber &\qquad - \left(1-\frac{1}{n}\right) N^{-\frac{2}{n-1}}\,\frac{\Gamma\left(n\right)}{\Gamma\left(N+1\right)} \frac{\Gamma(N+ 1 + \frac{2}{n-1})}{(n+1) (n-2)!} \int\limits_{\partial K} \frac{\kappa(x)^{\frac{1}{n-1}}}{f(x)^{2 \over n-1}} H(x) \,\cHn(\dint x)\Big]\\
\nonumber & \le (1+\delta)^{\frac{3n^2 + 3n}{2}}\, a\, n^2\, N^{-\frac{2}{n-1}}\Bigg[ (1+\delta)^\frac{2n}{n-1}  \int\limits_{\partial K}  \frac{1}{h_K(u(x))} \frac{\kappa(x)^{\frac{1}{n-1}}}{f(x)^{2\over n-1}} \,\cHn(\dint x)\\
\nonumber &\qquad\qquad\qquad\qquad - \left(1-\frac{1}{n}\right) \frac{\Gamma(n)}{(n+1) (n-2)!} \int\limits_{\partial K} \frac{\kappa(x)^{\frac{1}{n-1}}}{f(x)^{2 \over n-1}} H(x) \,\cHn(\dint x)\Big]\\
\nonumber &\le (1+\delta)^{\frac{3n^2 + 3n}{2}}\, a\, n^2\, N^{-\frac{2}{n-1}}\Bigg[ (1+\delta)^\frac{2n}{n-1}  \int\limits_{\partial K}  \frac{1}{h_K(u(x))} \frac{\kappa(x)^{\frac{1}{n-1}}}{f(x)^{2\over n-1}} \, \cHn(\dint x) \\
\nonumber &\qquad\qquad\qquad\qquad- \left(1-\frac{1}{n}\right) \frac{(n-1)}{(n+1)}  \int\limits_{\partial K} \frac{\kappa(x)^{\frac{1}{n-1}}}{f(x)^{2 \over n-1}} H(x) \,\cHn(\dint x) \Big]\\
\nonumber &\le (1+\delta)^{\frac{3n^2 + 3n}{2}}\, a\, n^2\, N^{-\frac{2}{n-1}}\Bigg[ (1+\delta)^\frac{2n}{n-1}  \int\limits_{\partial K}  \frac{1}{h_K(u(x))} \frac{\kappa(x)^{\frac{1}{n-1}}}{f(x)^{2\over n-1}} \, \cHn(\dint x) \\
\label{eq:BeforeMinkowski}&\qquad\qquad\qquad\qquad- \left(1-\frac{1}{n}\right) \int\limits_{\partial K} \frac{\kappa(x)^{\frac{1}{n-1}}}{f(x)^{2 \over n-1}} H(x) \,\cHn(\dint x)\Bigg],
\end{align}
where in the third last inequality  we have also used that 
\begin{align*}
\frac{\Gamma(N-n+1) \Gamma\left(n+ \frac{2}{n-1}\right)}{\Gamma\left(N+1+\frac{2}{n-1}\right)} \sim \frac{1}{\binom{N}{n} n N^{2\over n-1}}
\end{align*}
and
\begin{align*}
 \Gamma\Big(N+1+\frac{2}{n-1}\Big) \sim N^{2\over n-1} \Gamma\left(N+1\right).
\end{align*}
Now, we replace the generic density $f$ by the particular function $f_n$, which is defined in \eqref{eq:Deff_n}. Together with Minkowski's integral formula \eqref{eq:MinkowskiOriginal} this leads to the bound
\begin{align*}
T_1 & \leq (1+\delta)^{\frac{3n^2 + 3n}{2}}\, a\, n^2\, N^{-\frac{2}{n-1}}\Big[(1+\delta)^{2n\over n-1}\as_n(K)^{2\over n-1}\cHn(\partial K)\\
&\qquad\qquad\qquad-\left(1-{1\over n}\right)\as_n(K)^{2\over n-1}\int_{\partial K}h_K(u(x))\,H(x)\,\cHn(\dint x)\Big]\\
&=(1+\delta)^{\frac{3n^2 + 3n}{2}}\, a\, n^2\, N^{-\frac{2}{n-1}}\,\as_n(K)^{2\over n-1}\,\cHn(\partial K)\Big[(1+\delta)^{2n\over n-1}-\Big(1-{1\over n}\Big)\Big]\\
&\leq a\,n\,N^{-{2\over n-1}}\,\as_n(K)^{2\over n-1}\,\cHn(\partial K),
\end{align*}
where $a$ is some absolute constant. This completes the proof.
\end{proof}

\subsection{A bound for the term $T_2$} 

Now, we deal with the term $T_2$ in Lemma \ref{Schritt2a}. 
\begin{lem}\label{Schritt5}
For sufficiently large $N$, it holds that
\begin{align*}
T_2  \le   a \, \frac{N^{-\frac{2}{n-1}} }{  \sqrt{n}}\,\as_n(K)^{2\over n-1}\,\cHn(\partial K),
\end{align*}
where $a\in (0,\infty)$ is an absolute constant. 
\end{lem}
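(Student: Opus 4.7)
The plan is to exploit two simple features of the inner integral in $T_2$: first, on the range $s\in[0,s(ch_K(u))]$ the quantity $z=z(s)$ is bounded by $ch_K(u)$, so the factor $(ch_K(u)-z)/h_K(u)$ is trivially at most $c$; second, the upper limit $s(ch_K(u))$ is itself very small because by \eqref{s} it is controlled by $(ch_K(u))^{(n-1)/2}$, and $c$ is of order $N^{-2/(n-1)}$. Combined with the factor $\binom{N}{n}$, this will leave a bound of the right order $N^{-2/(n-1)}$.

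First I would estimate $\int_0^{s(ch_K(u))}(1-s)^{N-n}s^{n-1}\dint s \le s(ch_K(u))^n/n$. Then I would insert the upper bound from \eqref{s} evaluated at $z=ch_K(u)$ to get
\[
s(ch_K(u))^n \le \bigl[(1+\delta)^{n+1}2^{(n-1)/2}\kappa_{n-1}\bigr]^n f(x(u))^n\kappa(x(u))^{-n/2}(ch_K(u))^{n(n-1)/2}.
\]
After this substitution, specializing $f=f_n$ according to \eqref{eq:Deff_n}, the quantity $f_n(x(u))^n\kappa(x(u))^{-n/2}h_K(u)^{n(n-1)/2}$ collapses to $\as_n(K)^{-n}$, so that the $x(u)$-dependence reduces to the factor $1/\kappa(x(u))$, and the $h_K(u)$-dependence disappears. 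The change-of-variables formula \eqref{nvol} then gives $\int_{\SSn}\kappa(x(u))^{-1}\,\cHn(\dint u)=\cHn(\partial K)$.

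At this point I would be left with
\[
T_2 \;\le\; C_n\,\binom{N}{n}\,\frac{c^{1+n(n-1)/2}}{\as_n(K)^{n}}\,\cHn(\partial K),
\]
where $C_n$ collects all the $n$-dependent factors from $(1+\delta)^{\bullet}$, $\kappa_{n-1}^n$, $2^{n(n-1)/2}$, the factorials, and the $n^2/n$ in front. Using \eqref{c} (noting that with $f=f_n$, Minkowski's formula \eqref{eq:MinkowskiOriginal} gives $\int_{\partial K}\kappa^{1/(n-1)}f_n^{-2/(n-1)}H\,\dint\cHn=\as_n(K)^{2/(n-1)}\cHn(\partial K)$, so $c\sim\text{(dim.\ constant)}\,N^{-2/(n-1)}\as_n(K)^{2/(n-1)}$) the power $c^{n(n-1)/2}/\as_n(K)^n$ simplifies to a pure dimensional constant times $N^{-n}$, while the remaining factor $c$ contributes $N^{-2/(n-1)}\as_n(K)^{2/(n-1)}$ times a dimensional constant. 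The $N^n$ from $\binom{N}{n}\sim N^n/n!$ then cancels the $N^{-n}$, producing the required $N^{-2/(n-1)}$ behaviour and the factor $\as_n(K)^{2/(n-1)}\cHn(\partial K)$.

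The main obstacle is the constant bookkeeping: I have to show that the combined dimensional prefactor, which looks threatening because of the term $\kappa_{n-1}^n\cdot 2^{n(n-1)/2}$ raised to the $n$-th power, is actually tamed by $1/n!$ and by the $n$-th power of the constant coming from \eqref{c} (which itself carries a $1/(n+1)(n-2)!$). I expect to use Stirling's formula on $\kappa_{n-1}=\pi^{(n-1)/2}/\Gamma((n+1)/2)$ and on the $\Gamma$-factors from $c_{n,K,f_n}$, together with the fact that $c^{n(n-1)/2}$ decays super-exponentially in $n$ for large $N$, so that the whole constant is not only bounded but actually of order $n^{-1/2}$, matching the claimed $1/\sqrt{n}$ in the statement.
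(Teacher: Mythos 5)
There is a genuine gap at the very first step, and it is fatal for the stated bound. You estimate the factor $(ch_K(u)-z)/h_K(u)$ simply by $c$ and then integrate $s^{n-1}$ alone, i.e.\ you bound $\int_0^{s(ch_K(u))}(1-s)^{N-n}s^{n-1}(ch_K(u)-z)\,\dint s$ by $c\,h_K(u)\,s(ch_K(u))^n/n$. This discards the near-cancellation that the paper's proof relies on: on the range $s\in[0,s(ch_K(u))]$ the height $z=z(s)$ grows like $s^{2/(n-1)}$ and reaches $ch_K(u)$ exactly at the endpoint, so the true weighted average of $(ch_K(u)-z)$ against $s^{n-1}$ is smaller than $ch_K(u)$ by a factor of order $1/(n(n-1))$. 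Concretely, the paper keeps both terms, integrates $s^{n-1}$ and $s^{n-1+\frac{2}{n-1}}$ exactly, and uses the lower bound in \eqref{s} (equivalently the cap relation of Lemma \ref{lem:Reitzner2}) to show that the subtracted term nearly matches the first, leaving the bracket $\bigl[1-\tfrac{(1+\delta)^{-4(n+1)/(n-1)}}{1+\frac{2}{n(n-1)}}\bigr]=O\bigl(\tfrac{1}{n^2}\bigr)$ in \eqref{T2}. That factor $1/(n(n-1))$ is exactly what is missing from your bound.

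The rest of your bookkeeping is essentially the paper's (bounding $s(ch_K(u))^n$ via \eqref{s} at $z=ch_K(u)$, i.e.\ \eqref{boundschKu}, then \eqref{nochmal:c}, Stirling, \eqref{nvol}, and the specialization $f=f_n$ from \eqref{eq:Deff_n}), but if you track the powers of $n$ you will see your hope for an extra $n^{-1/2}$ from Stirling does not materialize: $n\binom{N}{n}\tfrac{n^n}{e^nN^n}\lesssim\sqrt{n}$ and $c\lesssim a\,n\,N^{-2/(n-1)}\as_n(K)^{2/(n-1)}$, so your route yields $T_2\le a\,n^{3/2}\,N^{-2/(n-1)}\as_n(K)^{2/(n-1)}\cHn(\partial K)$, off from the claimed $1/\sqrt{n}$ by precisely the factor $n(n-1)$ you threw away. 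This is not merely a lost constant: with only $T_2\lesssim n^{3/2}(\cdot)$ the final assembly $T_1+T_2$ would give Theorem \ref{mainresult} with $n^{3/2}$ in place of $n$. To repair the argument you must not replace $(ch_K(u)-z)$ by $ch_K(u)$; retain the $z$-term, convert it to $s^{2/(n-1)}$ via the lower bound in \eqref{s}, and extract the $O(1/n^2)$ bracket before proceeding with the estimates you describe.
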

\begin{proof}
By definition of $T_2$ in Lemma \ref{Schritt2a} and (\ref{s}), 
\begin{align*}
T_2 &\le (1+\delta)^{\frac{3n^2 + 3n}{2}}\, a\, \binom{N}{n}\, n^2\, \int\limits_{\SSn}  
\frac{1}{\kappa(x(u)) \   h_K(u)}\, \int\limits_{0}^{s(c h_K(u))} (1-s)^{N-n}\, s^{n-1}\,  \\
&\qquad \times\left(c h_K(u) - \frac{s^{\frac{2}{n-1} } \   \kappa(x(u)) ^{1\over n-1}}{2 (1+\delta)^{2\frac{n+1}{n-1}}\,  f (x(u)) ^{2\over n-1}  \vol_{n-1}(\BB^{n-1})^{2\over n-1} }  \right) \dint s \cHn(\dint u)\\
&\le (1+\delta)^{\frac{3n^2 + 3n}{2}}\, a\, \binom{N}{n}\, n^2\, \int\limits_{\SSn}  
\frac{1}{\kappa(x(u)) \   h_K(u)}\, \int\limits_{0}^{s(c h_K(u))}  s^{n-1}\,  \\
& \qquad\times\left(c h_K(u) - \frac{s^{\frac{2}{n-1} } \   \kappa(x(u)) ^{1\over n-1} }{2 (1+\delta)^{2\frac{n+1}{n-1}}\,  f (x(u)) ^{2\over n-1}  \vol_{n-1}(\BB^{n-1})^{2\over n-1} }  \right) \dint s \cHn(\dint u)\\
&=  (1+\delta)^{\frac{3n^2 + 3n}{2}}\, a\, \binom{N}{n}\, n^2\,  
\Bigg[ \frac{c}{n} \int\limits_{\partial K}  
 s(c h_K(u))^{n}\,  \cHn(\dint x)    \\
 & -  \frac{(1+\delta)^{-2\frac{n+1}{n-1}}}{ \left(n+\frac{2}{n-1}\right) \   \vol_{n-1}(\BB^{n-1})^{2\over n-1}} \   
  \int\limits_{\partial K}  \frac{\kappa(x) ^{1\over n-1} s(c h_K(u(x)))^{n} \   s(c h_K(u(x)))^{\frac{2}{n-1}}} {2  h_K(u(x)) f (x) ^{2\over n-1} } \cHn(\dint x) \Bigg], 
\end{align*}
where $a\in (0,\infty)$ is an absolute constant and where we have used (\ref{nvol})  in the last equality. 
By Lemma \ref{lem:Reitzner2} we get 
	\begin{align}\label{boundschKu}
			s(c h_K(u(x)))^{\frac{2}{n-1}} &\leq 2 \  (1 + \delta )^{2 \frac{n+1} {n-1}} \  c \  \vol_{n-1}(\BB^{n-1})^{2\over n-1} \  
			\frac{ f (x)^{2\over n-1} \  h_K(u(x)) } {\kappa(x)^{1\over n-1} },
	\end{align}
where $c$ is as in \eqref{c}. Therefore,
\begin{eqnarray}\label{T2}
\nonumber T_2  &\le& a \  (1+\delta)^{\frac{3n^2 + 3n}{2}}\,   c  \  \binom{N}{n}\, n \,  \int\limits_{\partial K}  
 s(c h_K(u))^{n}\,  \cHn(\dint x)  \  \left[ 1-   \frac{(1+\delta)^{-4 \frac{n+1}{n-1}}}{ 1+ \frac{2}{n(n-1)} } \right] \\
&\le& a \  (1+\delta)^{\frac{3n^2 + 3n}{2}}\,   \frac{c}{n(n-1)}   \  \binom{N}{n}\, n \,  \int\limits_{\partial K}   s(c h_K(u))^{n}\,  \cHn(\dint x) .
\end{eqnarray}
By \eqref{c} and Stirling's formula it holds that
	\begin{eqnarray}\label{nochmal:c}
		&& (2\ c)^\frac{n-1}{2}  \  \vol_{n-1}(\BB^{n-1})  \le \frac{1}{N}  \   \vol_{n-1}(\BB^{n-1})  \nonumber \\
		&& \left({2\over \pi(n+1)(n-1)!}{\Gamma(n+{2\over n-1})\over\Gamma({n-1\over 2})}\Gamma\Big({n+1\over 2}\Big)^{n+1\over n-1}\right) ^\frac{n-1}{2} 
		 \left( \frac {\int\limits_{\partial K} \frac{\kappa(x)^{ \frac{1}{n-1}}}{f(x)^{2\over n-1}} H(x) \,\cHn(\dint x)}{\cHn(\partial K)} \right) ^\frac{n-1}{2}  \nonumber \\
	&&= \frac{1}{N} \  \left({2\ n \over (n+1)!}{\Gamma(n+{2\over n-1})\over\Gamma({n-1\over 2})}\Gamma\Big({n+1\over 2}\Big)\right) ^\frac{n-1}{2} 
		 \left( \frac {\int\limits_{\partial K} \frac{\kappa(x)^{ \frac{1}{n-1}}}{f(x)^{2\over n-1}} H(x) \,\cHn(\dint x)}{\cHn(\partial K)} \right) ^\frac{n-1}{2} \nonumber \\
	&& \sim  \frac{n}{e \ N } \ \left( \frac {\int\limits_{\partial K} \frac{\kappa(x)^{ \frac{1}{n-1}}}{f(x)^{2\over n-1}} H(x) \,\cHn(\dint x)}{\cHn(\partial K)} \right) ^\frac{n-1}{2}.
	\end{eqnarray}			
We use this together with  \eqref{boundschKu} in \eqref{T2} and get the bound  		
\begin{eqnarray*}
T_2  &\le& a \  (1+\delta)^{\frac{4n^2 + 4n}{2}}\,    \frac{c}{n(n-1)}  \   \frac{n^n}{e^{n} \ N^n } \  \binom{N}{n}\, n \\
&&\times\left( \frac {\int\limits_{\partial K} \frac{\kappa(x)^{ \frac{1}{n-1}}}{f(x)^{2\over n-1}} H(x) \,\cHn(\dint x)}{\cHn(\partial K)} \right) ^\frac{n(n-1)}{2}	
\int\limits_{\partial K} \frac{f(x)^{n}  \   h_K(u(x))^\frac{n(n-1)}{2}}{\kappa(x)^{ \frac{n}{2}}} \,\cHn(\dint x) \\
&\leq& a \  (1+\delta)^{\frac{4n^2 + 4n}{2}}\,    \frac{c\,n^n}{ (n-1) \, n!\, e^n}  \\
&&\times\left( \frac {\int\limits_{\partial K} \frac{\kappa(x)^{ \frac{1}{n-1}}}{f(x)^{2\over n-1}} H(x) \,\cHn(\dint x)}{\cHn(\partial K)} \right) ^\frac{n(n-1)}{2}	
\int\limits_{\partial K} \frac{f(x)^{n}  \   h_K(u(x))^\frac{n(n-1)}{2}}{\kappa(x)^{ \frac{n}{2}}} \,\cHn(\dint x) \\
&\leq& a \  (1+\delta)^{\frac{4n^2 + 4n}{2}}\,    \frac{c}{ \sqrt{2\pi}\, (n-1) \,\sqrt{n}}  \\
&&\times\left( \frac {\int\limits_{\partial K} \frac{\kappa(x)^{ \frac{1}{n-1}}}{f(x)^{2\over n-1}} H(x) \,\cHn(\dint x)}{\cHn(\partial K)} \right) ^\frac{n(n-1)}{2}	
\int\limits_{\partial K} \frac{f(x)^{n}  \   h_K(u(x))^\frac{n(n-1)}{2}}{\kappa(x)^{ \frac{n}{2}}} \,\cHn(\dint x) , 
\end{eqnarray*}					
		where we  made use of  Stirling's formula once again.
Now we use that $c$ satisfies
$$
c \le a\, \frac{n}{\cHn(\partial K)\, N^{\frac{2}{n-1}} } \  \int\limits_{\partial K} \frac{\kappa(x)^{ \frac{1}{n-1}}}{f(x)^{2\over n-1}} H(x) \,\cHn(\dint x),
$$
together with the elementary inequality ${\sqrt{n}\over\sqrt{2\pi}\,(n-1)}\leq {1\over\sqrt{n}}$ to see that
\begin{eqnarray*}
T_2  &\le&  a \  (1+\delta)^{\frac{4n^2 + 4n}{2}}\,  \frac{1}{  \sqrt{n} \,N^{\frac{2}{n-1}} } \\
&&\times\left( \frac {\int\limits_{\partial K} \frac{\kappa(x)^{ \frac{1}{n-1}}}{f(x)^{2\over n-1}} H(x) \,\cHn(\dint x)}{\cHn(\partial K)} \right) ^\frac{n(n-1)+2}{2}	
\int\limits_{\partial K} \frac{f(x)^{n}  \   h_K(u(x))^\frac{n(n-1)}{2}}{\kappa(x)^{ \frac{n}{2}}} \,\cHn(\dint x) .
\end{eqnarray*}
Now we plug in the special density function $f_n$ given by \eqref{eq:Deff_n} for $f$. Then
$$
\left( \frac {\int\limits_{\partial K} \frac{\kappa(x)^{ \frac{1}{n-1}}}{f(x)^{2\over n-1}} H(x) \,\cHn(\dint x)}{\cHn(\partial K)} \right) ^\frac{n(n-1)+2}{2}	 = \as_n(K)^{{2\over n-1}{n(n-1)+2\over 2}}
$$
and
$$
\int\limits_{\partial K} \frac{f(x)^{n}  \   h_K(u(x))^\frac{n(n-1)}{2}}{\kappa(x)^{ \frac{n}{2}}} \,\cHn(\dint x) = \as_n(K)^{-n}\,\cHn(\partial K),
$$
which yields the bound
$$
T_2 \leq a \, \frac{N^{-\frac{2}{n-1}} }{  \sqrt{n}}\,\as_n(K)^{2\over n-1}\,\cHn(\partial K).
$$
This completes the proof of the lemma.
\end{proof}

\subsection{Completion of the proof of Theorem \ref{mainresult}}

We are now ready to complete the proof of Theorem \ref{mainresult}. Indeed, Lemma \ref{Schritt2a}, Lemma \ref{Schritt4} and Lemma \ref{Schritt5} imply that for sufficiently large $N$,
\begin{equation}\label{last}
\begin{split}
\EE[\Delta_s((1-c)K, P_N)] &\le T_1+T_2\\
& \leq  a_1\,n\,N^{-{2\over n-1}}\,\as_n(K)^{2\over n-1}\,\cHn(\partial K)\\
&\qquad\qquad+a_2 \, \frac{N^{-\frac{2}{n-1}} }{  \sqrt{n}}\,\as_n(K)^{2\over n-1}\,\cHn(\partial K)\\
&\leq a \,n\,N^{-{2\over n-1}}\,\as_n(K)^{2\over n-1}\,\cHn(\partial K)
\end{split}
\end{equation}
where $a,a_1,a_2\in (0,\infty)$ are absolute constants. Taking into account that we were approximating the body $(1-c)K$ instead of $K$, we need to multiply the bound  \eqref{last} by $(1-c)^{-(n-1)}$. Since
\begin{align*}
(1-c)^{n-1} \ge 1 - (n-1)c,
\end{align*}
for sufficiently large $N$ (recall that the definition of $c$ depends on $N$),  we have  that
$$
(1-c)^{-(n-1)} \le a,
$$
where $a\in (0,\infty)$ is another absolute constant. This proves that the \textit{expected} surface area deviation between $K$ and $P_N$ is bounded by the right-hand side in \eqref{last}. In particular, this means that there must exist a realization $P_N(\omega)$ of a polytope with precisely $N$ vertices for some $\omega\in\Omega$ (recall that $(\Omega,\cA,\PP)$ is our underlying probability space) such that the surface area deviation between $P_N(\omega)$ and $K$ is bounded by the same expression. Taking $P$ to be this realization proves the claim of Theorem \ref{mainresult}.\hfill $\Box$

\subsection*{Acknowledgement}

JG was supported by the Deutsche Forschungsgemeinschaft (DFG) via RTG 2131 \textit{High-dimensional Phenomena in Probability -- Fluctuations and Discontinuity}. EW was partially supported by NSF grant DMS-1811146.

\end{document}